\renewcommand{\a}{\alpha}
\renewcommand{\b}{\beta}
\newcommand{\g}{\gamma}
\renewcommand{\d}{\delta}
\newcommand{\D}{\Delta}
\newcommand{\f}{\varphi}
\newcommand{\s}{\sigma}
\newcommand{\Si}{\Sigma}
\renewcommand{\k}{\kappa}
\renewcommand{\l}{\lambda}
\newcommand{\z}{\zeta} 
\renewcommand{\t}{\theta}
\renewcommand{\O}{\Omega}
\newcommand{\cF}{{\mathcal F}}
\newcommand{\cC}{{\mathcal C}}
\newcommand{\cM}{{\mathcal M}}
\newcommand{\cB}{{\mathcal B}}
\newcommand{\cL}{{\mathcal L}}
\newcommand{\cE}{{\mathcal E}}
\newcommand{\cU}{{\mathcal U}}
\newcommand{\cV}{{\mathcal V}}
\newcommand{\cP}{{\mathcal P}}
\newcommand{\cN}{{\mathcal N}}
\newcommand{\cH}{{\mathcal H}}
\newcommand{\bR}{\mathbb R}
\newcommand{\bZ}{\mathbb Z}
\newcommand{\bE}{\mathbb E}
\newcommand{\be}{\begin{equation}}
\newcommand{\ee}{\end{equation}}
\newcommand{\p}{\partial}
\newcommand{\tr}{\mathrm{tr}}
\newcommand{\beaa}{\begin{eqnarray*}}
\newcommand{\bea}{\begin{eqnarray}}
\newcommand{\beal}[1]{\begin{eqnarray}\label{#1}}
\newcommand{\bean}{\begin{eqnarray}\nonumber}
\newcommand{\beadl}[1]{\begin{deqarr}\label{#1}}
\newcommand{\eeadl}[1]{\arrlabel{#1}\end{deqarr}}
\newcommand{\eeal}[1]{\label{#1}\end{eqnarray}}
\newcommand{\eead}[1]{\end{deqarr}}
\newcommand{\eea}{\end{eqnarray}}
\newcommand{\eeaa}{\end{eqnarray*}}
\renewcommand{\to}{\rightarrow}
\DeclareMathOperator{\area}{area}
\renewcommand{\phi}{\varphi}
\renewcommand{\epsilon}{\varepsilon}
\renewcommand{\hat}{\widehat}
\newcommand{\<}{\langle}
\renewcommand{\>}{\rangle}
\newcommand{\dm}{{\partial M}}
\newcommand{\w}{\widetilde}
\theoremstyle{plain}
\newtheorem{theorem}{Theorem}[section]
\newtheorem{remark}[theorem]{Remark}
\newtheorem{lemma}[theorem]{Lemma}
\newtheorem{proposition}[theorem]{Proposition}
\theoremstyle{definition}
\def\blacksquare{\hbox to .60em {\vrule width .60em height .60em}}
\numberwithin{equation}{section}
\begin{document}

\title[ ]{The Bartnik quasi-local mass conjectures}

\author{Michael T. Anderson}
\address{Dept. of Mathematics, 
Stony Brook University,
Stony Brook, NY 11794}
\email{anderson@math.sunysb.edu}

\begin{abstract}
This paper is a tribute to Robert Bartnik and his work and conjectures on quasi-local mass. We present a framework in which to 
clearly analyse Bartnik's static vacuum extension conjecture. While we prove that this conjecture is not true in general,  
it remains a fundamental open problem to understand the realm of its validity. 
 
\end{abstract}

\maketitle 

\begin{flushright} In Memory of Robert Bartnik
\end{flushright}

\setcounter{section}{0}
\setcounter{equation}{0}

\section{Introduction}
\setcounter{equation}{0}

  This paper is a tribute to Robert Bartnik and his work on quasi-local mass in General Relativity. I met Robert in person only 
 twice, at the Newton Institute in 2005 and then a few years later in Oberwolfach and my personal interactions with him were sadly somewhat limited. 
 Robert's conjectures on the realization of the quasi-local mass, related to the structure of the space of static vacuum solutions of the Einstein equations 
 have had a profound impact on a significant portion of my research over the last 20 years. Personally, I was not trained early in GR and came to the 
 subject rather indirectly through a study of the geometrization problem for 3-manifolds (the Thurston conjecture). In that approach to geometrization, 
 a surprising connection arose between the degeneration of metrics of controlled scalar curvature and the structure of static vacuum Einstein metrics in 3 
dimensions.\footnote{The singularity models of such degeneration are static vacuum Einstein metrics, much like the singularity models of Ricci flow are 
 Ricci solitons. These two equations are superficially somewhat similar and it would be of interest to understand any deeper relations between them.} 
 I hope this paper may contribute in a small way to a deeper understanding and appreciation of the Bartnik quasi-local mass conjectures. 
 
 \medskip 
 
  The Bartnik mass of a bounded domain $\O$ with smooth boundary $\partial \Omega$ is a very natural and direct localization of the global 
ADM mass $m_{ADM}$. The original definition of Bartnik is the following, cf.~\cite{Ba1}, \cite{Ba2}. Let $\O$ be a 3-dimensional domain with smooth 
connected boundary $\partial \O$ (we will usually assume $\partial \O \simeq S^2$) and let $g_{\O}$ be a smooth Riemannian metric on 
$\O$, smooth up to $\partial \O$, with non-negative scalar curvature, $s_{g_{\O}} \geq 0$. Let $(M, g)$ be an asymptotically flat (AF) extension 
of $(\O, g_{\O})$; thus $\dm = \partial \O$ and the union $\O \cup M$ is a smooth, complete AF Riemannian manifold with non-negative and 
integrable scalar curvature. Assume in addition that $(M, g)$ has no horizons, i.e.~$(M, g)$ has no minimal surfaces surrounding $\dm$. 
Let $\cP_{\O}$ denote the set of such AF extensions $(M, g)$. The Bartnik mass of the smooth bounded domain $(\O, g_{\O})$ is then defined by 
\be \label{bm}
m_{B}(\O, g_{\O}) = \inf  \{m_{ADM}(M, g): (M, g) \in \cP_{\O}\},
\ee
where the infimum is taken over all $(M, g) \in \cP_{\O}$. 

    Bartnik observed in \cite{Ba1}, \cite{Ba2} that an AF extension $(M, g)$ of $\partial \O$ with $\hat M = \Omega \cup M$ which 
realizes the infimum in \eqref{bm} will in general only be Lipschitz along the ``seam'' $\partial \Omega = \dm$. By a 
simple and elegant argument using the $2^{\rm nd}$ variational formula for area, he showed that a minimizer should obey 
the boundary conditions 
\be \label{bcont}
\g_{\partial \O} = \g_{\dm}, \ \ \ \ H_{\partial \O} = H_{\dm},
\ee
where $\g$ is the induced metric on the boundary and $H$ is the mean curvature of $\dm$ with respect to the unit normal pointing into $M$, 
both with respect to $g_{\O}$ and $g$ respectively. The relation \eqref{bcont} 
implies that the scalar curvature is defined as a non-negative distribution across the seam $\dm = \partial \O$.\footnote{Actually this holds for 
$H_{\partial \O} \geq H_{\dm}$ and this condition is sometimes used instead of \eqref{bcont}, cf.~\cite{J}.} Standard minimal surface 
arguments show that if $H_{\dm} \leq 0$ then any extension $(M, g)$ has a horizon, so that it is common practice to assume 
\be \label{H}
H = H_{\dm} > 0.
\ee
As discussed in more detail in \S 2, there are several further reasons why the restriction \eqref{H} is essential. In the following, we will 
always assume \eqref{H}, unless explicitly noted otherwise. 

  The quasi-local data $(\g, H)$ of $\O$ on $\p \O = \dm$ are now called Bartnik boundary data. There are a number of modifications or 
variations of the definition of the Bartnik mass $m_{B}$; we refer to \cite{J} and further references therein for a careful and detailed 
discussion.  

\medskip 

  By general principles, it is to be expected that an extension $(M, g)$ realizing the infimum in \eqref{bm} satisfies strong 
conditions. In \cite{Ba1}, \cite{Ba2}, Bartnik presented a natural physical argument that extensions $(M, g)$ realizing the infimum in 
\eqref{bm} should be solutions of the static vacuum Einstein equations. Briefly, any dynamical gravitational 
field carries energy and so mass  and so an extension of minimal mass should 
have no gravitational dynamics, i.e.~be time-independent. For similar reasons, a minimal-mass extension should have no mass coming 
from matter sources, and so be vacuum. A time-independent vacuum solution which is time-symmetric ($K = 0$) is static vacuum. 

   The static vacuum Einstein equations are the equations for a pair $(g, u)$ on $M$ where $u$ is a potential function (forming 
the lapse function of the space-time $\cM = I\times M$) given by 
\be \label{stat}
u{\rm Ric}_g = D^{2}u, \ \ \D u = 0,
\ee
where ${\rm Ric}_g$ is the Ricci curvature, $D^2 u$ is the Hessian of $u$ and $\D u$ is the Laplacian of $u$, all with respect to $g$. 
These equations are equivalent to the statement that the space-time metric 
\be \label{st}
g_{\cM} = -u^{2}dt^{2} + g
\ee
is Ricci-flat, i.e.~a vacuum solution of the Einstein equations.\footnote{This holds in both Lorentizian and Riemannian signature.} 
It is important both mathematically and physically to add the requirements that $u > 0$ in $M$ and $u \to 1$ at infinity.
  
  A clearer approach as to why the static vacuum equations \eqref{stat} arise as minimizers was suggested by Bartnik in \cite{Ba3}, following 
a proposal by Brill-Deser-Fadeev \cite{BDF} regarding the positive mass theorem. Thus consider the Regge-Teitelboim Hamiltonian (with 
zero shift) given by 
\be \label{RT}
\cH_{RT} = -\int_M us_g dv_g +16\pi m_{ADM},
\ee
where $s_g = {\rm tr}_g {\rm Ric}_g$ is the scalar curvature. 
The first term corresponds to the Einstein-Hilbert action while the mass term is introduced to give a well-defined, and in particular 
differentiable variational problem on the space of fields. The configuration space of fields is here given by AF static metrics as in 
\eqref{st}, but not necessarily vacuum, i.e.~pairs $(g, u)$ with $u > 0$; the AF conditions are described more precisely in \S 2. The 
Hamiltonian $\cH_{RT}$ is thus a smooth function on such static pairs $(g, u)$. 

Let 
$$\cC = \{g: R_g = 0\}.$$
This is the vacuum constraint set (in the time-symmetric case $K = 0$). Let $\cC_{(\g,H)} \subset \cC$ be the subset where the 
boundary data $(\g, H)$ induced by $g$ is fixed. A necessary first step in the Bartnik program is to prove that $\cC_{(\g,H)}$ is a smooth (Banach) 
manifold; this was proved to be the case in \cite{AJ}. The Hamiltonian $\cH_{RT}$ then 
restricts to be the smooth function 
$$\cH_{RT} = 16\pi m_{ADM}: \cC_{(\g,H)} \to \bR$$
on $\cC_{(\g,H)}$. Thus, critical points of the ADM mass on $\cC_{(\g,H)}$ are just the critical points of $\cH_{RT}$ on $\cC_{(\g,H)}$. It is then 
straightforward to verify that such critical points are exactly static vacuum Einstein metrics. Further,  cf. ~\cite{AJ}, one has $u > 0$ 
everywhere and $u \to 1$ at infinity, at least for critical points realizing the infimum of $m_{ADM}$ on $\cC_{(\g,H)}$, i.e.~the Bartnik mass. 
 
   There is a useful analogy, cf.~\cite{Ba2}, \cite{Ba4}, \cite{HI}, of the Bartnik mass with the gravitational capacity of a body $\O \subset \bR^{3}$ 
in Newtonian gravity, or a charged body in electrostatics. Here one minimizes the Dirichlet energy,
\be \label{Eu}
E(v) = \int_{M}|dv|^{2},
\ee
over $M = \bR^{3}\setminus \O$ with boundary conditions $v = 1$ at $\partial \O$ and $v \to 0$ at infinity.\footnote{In \eqref{Eu} and below 
the volume form will often be dropped from the notation when its choice is obvious.} (One could also 
set $v' = 1-v$ with $v' \to 1$ at infinity). Classical results show that the infimum of \eqref{Eu} is realized by a unique harmonic 
function $v_0$, $\D v_0 = 0$ on $M$; $v_0$ represents the gravitational potential of the single layer $\partial \Omega$. The capacity 
of $\O$, equal to the total mass or charge up to a constant, is then given by 
\be \label{Nu}
E(\O) = \inf E(v) = -\int_{\dm}N(v_0),
\ee
where $N$ is the unit normal into $M$ at $\dm$. 

   Partly based on the Newtonian analogy, Bartnik \cite{Ba1}, \cite{Ba2}, made the bold conjecture that a minimizer $(M, g)$ in \eqref{bm} 
also always exists and is unique.

\medskip 

{\bf Conjecture I (Bartnik Minimization Conjecture).}  For any given smooth boundary data $(\g, H)$, $H > 0$, of a domain $\O$, $\p \O \simeq S^2$, 
the infimum in \eqref{bm} is realized by a unique (up to isometry) AF extension $(M, g)$, i.e. 
$$m_{ADM}(M, g) = m_B.$$ 
Moreover, there is a positive potential function $u > 0$ on $M \simeq \bR^3 \setminus B$ with $u \to 1$ at infinity such that the triple  $(M, g, u)$ 
is a solution of the static vacuum Einstein equations \eqref{stat} with boundary data $(\g,H)$.  

\medskip 

  The validity of this conjecture would ensure that the Bartnik mass is well-behaved as a function of the boundary data $(\g, H)$ coming from 
the solid body $(\O, g_{\O})$. For example, it would follow that the Bartnik mass is a smooth function of the boundary data $(\g, H)$; in fact 
the mass would be effectively computable via the expression 
$$m_{B} = \frac{1}{4\pi}\int_{\dm} N(u),$$
where $(M, g, u)$ is the unique static vacuum extension of $(\g, H)$. 

If true, Conjecture I suggests the following conjecture \cite{Ba1}, \cite{Ba2}, which thus serves as a test 
of the minimization conjecture but is also of independent interest in geometric PDE theory. 

\medskip 

{\bf Conjecture II (Bartnik Static Extension Conjecture).}  Given smooth boundary data $(\g, H)$, $H > 0$, on $\dm \simeq S^2$, there 
exists a unique (up to isometry) AF solution $(M, g, u)$, $M \simeq \bR^3 \setminus B$, $u > 0$, $u \to 1$ at infinity, of the static 
vacuum Einstein equations \eqref{stat} which induces the data $(\g, H)$ at $\dm$. 

\medskip 

  Conjecture II may be considered as a pure PDE problem: it concerns the unique global solvability of a non-linear elliptic boundary 
 value problem. Conjecture I does not logically imply Conjecture II since a static vacuum solution may not minimize the Bartnik mass. 
 Nevertheless, we expect that any counterexample to Conjecture I leads also to counterexamples to Conjecture II. Of course 
 counterexamples to Conjecture II are counterexamples to Conjecture I. 

\medskip 

   The static vacuum Einstein equations are seemingly among the simplest set of geometric (diffeomorphism invariant) PDE for a general 
metric $g$, either on a Lorentzian or Riemannian 4-manifold. In the Lorentzian or General Relativity setting, the simplest vacuum 
Einstein solutions are static, i.e.~time independent with vanishing initial momentum and so have a hypersurface-orthogonal time-like 
Killing field.\footnote{The focus here is on static case; for recent progress on the more difficult stationary case, cf.~\cite{An}.}  
Similarly, Einstein metrics on 4-manifolds are of fundametal interest in Riemannian geometry and the existence of a 
global non-vanishing  Killing field reduces the Einstein PDE system to the simpler system \eqref{stat} on a 3-manifold $M$. 
While the equations \eqref{stat} lie at the crossroads or intersection of Lorentzian and Riemannian geometry, gaining an understanding 
of the space of solutions and of Conjectures I and II has remained very difficult. One could impose a further symmetry, i.e.~Killing field 
to simplify the problem further to 2 dimensions. This leads to the class of axi-symmetric or Weyl solutions, discussed further in 
detail in \S 4: however, even in this case, Conjectures I and II remain very challenging. 
  
    For Conjecture II, the source of these difficulties is of course the non-linear nature of the PDE \eqref{stat} and the boundary data $(\g, H)$. 
This is also a coupled system of PDE, adding significantly to the complexity. To gain some general perspective, it is worth comparing 
the global existence and uniqueness problem for static vacuum Einstein metrics with other natural geometric problems. In fact, 
global existence and uniqueness for solutions of nonlinear elliptic geometric PDE is rather rare and is much more the exception 
than the rule. The list of examples is huge, so we mention only the following illustrative examples:

\medskip 
  Global existence and uniqueness:
  
$\bullet$ K\"ahler-Einstein metrics. Among the few situations where global existence and uniqueness hold, the most celebrated is Yau's solution 
of the Calabi problem for compact K\"ahler manifolds with $c_1 = 0$ and the Aubin-Yau theorem in the case $c_1 < 0$, as well as the more 
recent work of Chen-Donaldson-Sun in the more difficult positive case $c_1 > 0$. These existence proofs use a version of the continuity 
method, discussed briefly in \S 2. 

$\bullet$ Weyl embedding problem. Any smooth metric of positive Gauss curvature on $S^2$ admits a smooth isometric embedding 
into $\bR^3$ as the boundary of a convex body, unique up to rigid motion of $\bR^3$. Again, the existence proof of this result by Nirenberg 
uses the continuity method. 

\medskip 

Global existence, non-uniqueness:

$\bullet$ The Yamabe problem. Global existence was established by variational methods through the combined work of Yamabe, Trudinger, 
Aubin and Schoen. However, it is well-known that uniqueness, either of constant scalar curvature metrics or of Yamabe minimizers in a 
conformal class, fails in general. 

$\bullet$ Minimal varieties. There is a large and very satisfactory theory for existence of minimal varieties spanning a given boundary in an 
ambient Riemannian manifold, using either parametric methods in low dimensions (Plateau problem) or non-parametric methods (geometric 
measure theory) in all dimensions. However, it is again well-known that uniqueness fails in many situations. 

\medskip 

Failure of global existence and uniqueness: 

$\bullet$ Nirenberg problem. A very simple sounding problem is the Nirenberg problem of determining the existence and uniqueness of metrics on 
$S^2$ conformal to the round metric with prescribed Gauss curvature. Here, both existence and uniqueness fail in general and the number of 
solutions depends in a very complicated way on the structure of the Gauss curvature function $K: S^2 \to \bR$. 

$\bullet$ Vacuum Einstein constraint equations in GR. The main method of solving these constraint equations is the conformal method of 
Lichnerowicz, Choquet-Bruhat and York. Here one has full existence and uniqueness results for constant mean curvature (CMC) data, but 
failure of both existence and uniqueness for data far from CMC data. 

One could easily go on and on; but the lesson is that non-linear geometric problems usually have complicated global behavior. 

\medskip 

We note that there is now also a large and growing literature of estimates of the Bartnik mass $m_B(\g, H)$ from below and above in terms of the 
boundary data $(\g, H)$, as well as comparisons of $m_B$ with other notions of quasi-local mass, such as the Hawking, Brown-York, or Wang-Yau masses. 
These estimates and comparisons may be viewed as analogs of the extensive and well-known estimates of the capacity of a domain $\O \subset \bR^3$, 
or of the first eigenvalue $\l_1$ of a compact Riemannian manifold, in terms of other geometric data. 

However, the theory behind the capacity of a domain or of the first eigenvalue of the Laplacian is simple and easy to establish; they are realized by 
a harmonic function or a first eigenfunction whose existence and uniqueness is easy to establish, since the Laplacian is a single scalar, linear PDE. 
The focus is this paper is instead on the theory underlying the definition of Bartnik mass, in particular on the foundational Conjectures I and II. 
The Bartnik mass formally exists as a number, but one should understand its structure and global behavior as a function of the boundary data 
$(\g, H)$. 

\medskip 

  In accord with most of the examples discussed above, it turns out that both Conjectures I and II are false in full generality. 
For example, we prove: 

\begin{theorem} 
There is no static vacuum Einstein metric $(M, g, u)$ smooth up to $\dm$ with boundary data $(\g, H)$, where $\g$ is any 
smooth metric on $S^2$ for which the Gauss curvature $K_{\g}$ is a Morse function on $S^2$ and 
$$0 < H < H_0,$$
where $H_0$ is a positive constant, sufficiently small depending on $\g$. 
\end{theorem}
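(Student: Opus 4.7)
The plan is to argue by contradiction through a limiting procedure as $H \to 0$. Suppose the theorem fails; then for every $H_0 > 0$ one can find some $H \in (0, H_0)$ admitting a smooth static vacuum extension of $(\g, H)$ in $\cP_\O$. Choose a sequence $H_n \to 0^+$ with corresponding extensions $(M_n, g_n, u_n)$, so each $u_n > 0$, $u_n \to 1$ at infinity, $(M_n, g_n)$ is AF without horizons, and the boundary data on $\dm_n \simeq S^2$ is $(\g, H_n)$. The strategy is to extract a limit whose structure is incompatible with the Morse assumption on $K_\g$.

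The first step is compactness. The boundary metric $\g$ is fixed and $H_n$ is uniformly bounded, so the Bartnik boundary data is uniformly controlled. A uniform lower bound on $m_{ADM}(g_n)$ is provided by the positive mass theorem, while an upper bound follows from an explicit comparison extension (for instance, a suitable collar glued to a large Euclidean end). Combining these mass bounds with elliptic regularity for the static vacuum system \eqref{stat}, and using the no-horizon hypothesis to exclude interior collapse, one extracts (after subsequence and diffeomorphism) a limit $(M_\infty, g_\infty, u_\infty)$ solving \eqref{stat} on $M_\infty \simeq \bR^3 \setminus B$, with $u_\infty \geq 0$, $u_\infty \to 1$ at infinity, and limiting boundary data $(\g, 0)$. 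Thus $\dm_\infty$ is a closed minimal $2$-sphere isometric to $(S^2, \g)$ sitting inside a limit static vacuum metric.

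The second step exploits black hole rigidity. By the maximum principle applied to the harmonic function $u_\infty$ together with the Hopf boundary point lemma and the minimality of $\dm_\infty$, either $u_\infty > 0$ throughout $M_\infty \cup \dm_\infty$, or $u_\infty \equiv 0$ on $\dm_\infty$. The first alternative is excluded by the standard fact that a static vacuum AF manifold with $u > 0$ and $u \to 1$ at infinity contains no closed minimal surfaces. Therefore $u_\infty$ vanishes on $\dm_\infty$, which is a regular Killing horizon, and the classical black hole uniqueness theorems of Israel and Bunting--Masood-ul-Alam force $(M_\infty, g_\infty, u_\infty)$ to be the exterior Schwarzschild solution. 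In particular $\g$ is a round metric on $S^2$, so $K_\g$ is constant, contradicting the hypothesis that $K_\g$ is Morse.

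The main obstacle is the compactness step: ruling out interior horizons forming in $(M_n, g_n)$, maintaining uniform regularity up to the boundary as $u_n$ is expected to degenerate to $0$ on $\dm_n$, and securing mass bounds. The no-horizon condition built into $\cP_\O$ and standard elliptic regularity for the static vacuum system should handle the interior geometry. The delicate point is uniform boundary regularity of the limit triple as $u_n|_{\dm_n}$ tends to $0$; a blow-up/rescaling analysis near $\dm_n$, governed by the small parameter $H_n$, is the natural tool to identify how the pair $(g_n, u_n)$ degenerates and to pin down the limit as a Killing horizon on which Israel rigidity applies.
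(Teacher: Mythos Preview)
Your overall architecture --- argue by contradiction, pass to a limit with $H=0$, invoke black hole uniqueness to force $\g$ round --- matches the paper's. The genuine gap is in your compactness step, and it is exactly the step on which the paper spends almost all of its effort.

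You assert that mass bounds, elliptic regularity for \eqref{stat}, and the no-horizon hypothesis combine to give subconvergence of $(M_n,g_n,u_n)$ to a smooth limit with boundary data $(\g,0)$. But the paper establishes earlier in \S 3 that the Bartnik boundary map $\Pi_B$ is \emph{not} proper: uniform control on $(\g,H)$ does not by itself prevent curvature from blowing up at $\dm$ or the manifold-with-boundary structure from degenerating (cf.~the discussion around \eqref{Rinf}). The no-horizon condition does not rule this out, and there is no general elliptic estimate up to the boundary for \eqref{stat} that does the job; interior estimates \eqref{apri} say nothing at $\dm$. The only known sufficient hypothesis is that $\dm$ be outer-minimizing in $(M_n,g_n)$ (Theorem 3.3), and there is no reason this should hold along your sequence.

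What the paper actually does is replace $\dm$ by its outer-minimizing hull $\Si_n$, which decomposes as $U_n\cup V_n$ with $U_n\subset\dm$ and $V_n$ a stable minimal surface inside $(M_n,g_n)$. The heart of the argument is then a blow-up analysis (Lemmas 3.6--3.8 plus the regularity results of the Appendix) showing that $|s_{\g_{V_n}}|+|{\rm Rm}_{g_n}|$ stays bounded along $V_n$; only then can Theorem 3.3 be applied to the modified exteriors $\hat M_n$ to produce a Schwarzschild limit and derive the contradiction on $U_n\subset\dm$. Your last paragraph gestures toward a blow-up near $\dm_n$, but the correct object to blow up around is the minimal piece $V_n$ of the hull, not the original boundary, and controlling it requires the specific ingredients (scale-invariant $L^2$ bounds on $A$ and $d\nu$, non-collapse via the conformal metric $\w g=u^2g$, asymptotic flatness of blow-up limits) that constitute the bulk of the paper's proof.
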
 

We refer to Theorem 3.5 below for the exact statement. We also discuss in \S 3 why this non-existence result also leads to 
expected local non-uniqueness for Conjecture II. Conjecture I was previously proved to be false in certain (quite different) 
situations in \cite{AJ}. 

   It remains then a fundamental challenge to identify conditions or restrictions on the boundary data $(\g, H)$ which 
ensure the validity of the Bartnik Conjectures. 

\medskip 

  A brief summary of the contents of this paper is as follows. In \S 2, we discuss the basic framework in which to analyse Conjecture II 
and show that the local structures involved are very well behaved. In \S 3, we turn to global issues of existence and uniqueness and prove 
in particular Theorem 1.1. A large class of relatively explicit examples of static vacuum Einstein metrics are the Weyl metrics, discussed 
in detail in \S 4. Finally, in an Appendix, \S 5, we prove a regularity result for (local) minimal surfaces in static vacuum Einstein metrics needed 
for the proof of Theorem 3.5. 

\section{Local Behavior}

Let $\cB$ be the set of Bartnik boundary data $(\g, H)$ on $S^2$ with $H > 0$ as in \eqref{H}. Let $\cE$ be the set of isometry classes of 
static vacuum Einstein solutions with $H > 0$ at the boundary $\dm$.\footnote{It is an open question whether $\cE$ is connected or not. Thus 
$\cE$ is understood to be the component containing the standard flat exterior metric $\bR^3 \setminus B^3(1)$, with $g$ the Euclidean 
metric and $u \equiv 1$.} Thus $\cE$ is the set of equivalence classes of AF static vacuum 
Einstein metrics $(M, g, u)$, modulo the equivalence relation 
$$(g_1, u_1) \sim (g_2, u_2)$$
if there is a diffeomorphism $\psi:M \to M$ with $\psi = {\rm Id}$ on $\dm$ and $\psi \to {\rm Id}$ at infinity, (cf.~\eqref{AF3} below), such that 
$\psi^* g_2 = \psi^*g_1$ and $\psi^* u_2 = u_1$. One has a natural 
Bartnik boundary map 
\be \label{bmap}
\Pi_B: \cE \to \cB,
\ee
$$ (g, u) \to (\g, H).$$ 
Thus the static extension Conjecture II is equivalent to the statement that $\Pi_B$ is a bijection. This means that the space of solutions $(M, g, u)$ (up to 
isometry) should be set-theoretically parametrized by its boundary data $(\g, H)$. However, it is natural to introduce topology at this point. 
Well-posedness in PDE also typically assumes stability properties; small perturbations of the data (boundary data in this situation, initial or 
initial$+$boundary data in the case of parabolic or hyperbolic PDE) implies small perturbations of the solution; this corresponds to 
continuity of the inverse map $\Pi_B^{-1}$ in \eqref{bmap}. 

   The set $\cB$ of boundary data may be naturally topologized in several ways. For convenience, we choose here the H\"older space topology, 
 $$\cB = {\rm Met}^{m,\a}(S^2) \times C_+^{m-1,\a}(S^2).$$ 
 Here ${\rm Met}^{m,\a}(S^2)$ is the space of $C^{m,\a}$ Riemannian metrics on $S^2$ and $C_+^{m-1,\a}(S^2)$ is the space of positive $C^{m-1,\a}$ 
 functions on $S^2$. The space $\cB$ has the structure of a Banach manifold (an open subset of a linear Banach space)\footnote{In place of 
 $C^{m,a}$, it is actually better to choose the so-called little H\"older spaces $c^{m,\a}$, i.e. the closure of $C^{\infty}$ in the $C^{m,\a}$ H\"older norm, 
 which has the structure of a separable Banach space; however, this is not an essential distinction in the discussion to follow.}. 
 
 Next, let ${\rm Met}_{\d}^{m,\a}(M)$ be the space of $C^{m,\a}$ Riemannian metrics $g$ on $M \simeq \bR^3 \setminus B$ which are 
 asymptotically flat of order $\d$, where $\d \in (\frac{1}{2}, 1)$, i.e.~for $r$ sufficiently large, 
 \be \label{AF1}
 |g - g_{Eucl}| \leq cr^{-\d}, \ \ |\partial^{k+\a}g| \leq cr^{-\d-k-\a},
 \ee
$1 \leq k \leq m$. 
 Similarly, let $C_+^{m,\a}(M)$ denote the space of positive potential functions $u$ on $M$ which are asymptotically flat of order $\d$:
 \be \label{AF2}
 |u - 1| \leq cr^{-\d}, \ \ |\partial^{k+\a} u| \leq cr^{-\d-k-\a}.
 \ee
 The pair $(g, u)$ assemble to form the static or time-independent metric on the 4-manifold $\cM$, as in \eqref{st}. 
A $C^{m+1,\a}$ diffeomorphism $\psi: M \to M$ is AF if  
 \be \label{AF3}
 |\psi - {\rm Id}| \leq cr^{-\d}, \ \ |\partial^{k+\a}\psi| \leq cr^{-\d-k-\a},
 \ee
for $1 \leq k \leq m+1$. 
 
 The product ${\rm Met}_{\d}^{m,\a}(M)\times C_+^{m,\a}(M)$ then also has the structure of a Banach manifold. Let $\bE \subset 
 {\rm Met}_{\d}^{m,\a}(M)\times C_+^{m,\a}(M)$ be the subset of solutions of the static vacuum Einstein equations \eqref{stat} with 
 $H > 0$ at $\dm$. It is proved in \cite{AK} that $\bE$ is a smooth Banach submanifold of ${\rm Met}_{\d}^{m,\a}(M)\times C_+^{m,\a}(M)$. 
 Moreover, the group ${\rm Diff}_1^{m+1,\a}(M)$ of $C^{m+1,\a}$ AF diffeomorphisms of $M$ equal to the identity on the boundary 
 $\dm$ and asymptotic to ${\rm Id}$ at infinity as in \eqref{AF3}, acts smoothly, freely and properly on $\bE$, with a smooth local slice. 
 Consequently, the quotient 
 \be \label{cE}
\cE = \bE / {\rm Diff}_1^{m+1,\a}(M),
\ee
representing isometry classes of solutions, is also a smooth Banach manifold. 

  In addition, the Bartnik boundary map $\Pi_B$ in \eqref{bmap} is a smooth Fredholm map, of Fredholm index 0, cf.~again \cite{AK}. 
Thus the linearization or derivative $D_{(g,u)}\Pi_B$ at any point $(M,g,u)$ is a Fredholm map of Banach spaces. This 
means that the linear map $D\Pi_B$ has closed range and finite dimensional kernel and cokernel. In more detail, let 
$K$ be the finite dimensional kernel of $D\Pi_B$ at some point $(M, g, u) \in \cE$. The closed subspace $K$ admits a closed 
complement $D$, 
$$T_{(g,u)}\cE = D \oplus K,$$
and similarly the image, $W = {\rm Im} D\Pi_B$ is a closed complemented subspace of $T_{(\g,H)}\cB$. Let $C$ denote the finite dimensional 
complement. Then $dim C = dim K = k < \infty$ since $D\Pi_B$ has Fredholm index $0$. The restricted map 
$$D_{(g,u)}\Pi |_D: D \to W,$$
is an isomorphism. 

If $dim K = 0$, then $D\Pi_B$ itself is an isomorphism and so the inverse function theorem for Banach manifolds implies $\Pi_B$ is a 
local diffeomorphism near $(M,g,u)$. Suppose $dim K = k > 0$. Then the implicit function theorem (also known as the constant rank theorem) 
for Banach manifolds states that there is a local submanifold $\cU$ of codimension $k$ in $\cE$, with $T_{(g,u)}\cU \oplus K = T_{(g,u)}\cE$ 
and a local submanifold $\cV$ of codimension $k$ in the boundary data space $\cB$, with $T_{(\g,H)}\cV = W$ such that 
\be \label{UV}
\Pi_B | _{\cU}: \cU \to \cV
\ee 
is a diffeomorphism. Thus, near any given solution $(M, g, u)$, one has local existence (and local uniqueness) of solutions, parametrized 
by boundary data $(\g, H)$ in an open set of a local submanifold of codimension $k$ in $\cB$. 

 We see then that the local structure of the space of solutions $\cE$ and the local behavior of the map $\Pi_B$ relating solutions with boundary 
 data are both very well behaved. However, this structure of the mapping $\Pi_B$ breaks down at the locus $H = 0$ (minimal surface boundary) 
 basically due to the black hole uniqueness theorem. To see this, let $\bar \cE$ be the closure of $\cE$ in the $C^{m,\a}$ norm. This consists of 
 static vacuum Einstein metrics $(M, g, u)$ with $u \geq 0$ on $\dm$.\footnote{It does not really make sense in this framework to consider metrics 
 where ``$u < 0$" somewhere.} Similarly, let $\w \cB$ be the full space of boundary data, without the restriction that $H > 0$. One still has the 
 Bartnik boundary map 
\be \label{PiB2}
 \Pi_B: \bar \cE \to \w \cB,
 \ee
 $$(M, g, u) \to (\g, H).$$
  
  \begin{proposition} 
 Let $(M, g, u)$ be an AF static vacuum Einstein metric in $\bar \cE$ and let $\Si$ be a minimal surface surrounding $\dm$, 
 (possibly $\Si = \dm$). Then $\Si = \dm$ and $(M, g, u)$ is a Schwarzschild metric with $u = 0$ on the horizon $\dm$. 
 \end{proposition}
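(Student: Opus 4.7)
The plan is to identify $\Sigma$ as a non-degenerate Killing horizon (i.e.\ to force $u\equiv 0$ on $\Sigma$) and then invoke the Bunting--Masood-ul-Alam (BMA) black hole uniqueness theorem to identify the exterior of $\Sigma$ with Schwarzschild; topology then forces $\Sigma=\dm$.

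First, I would reduce to the case where $\Sigma$ is the outermost minimal surface enclosing $\dm$, using standard geometric measure theory in the AF setting together with the smoothness/regularity result for minimal surfaces in static vacuum backgrounds proved in the Appendix. Let $M_+$ denote the component of $M\setminus\Sigma$ containing the AF end. Since $u$ is harmonic on $M_+$, $u\to 1$ at infinity, and $u\ge 0$ on $\dm$, the strong maximum principle gives $u>0$ on the interior of $M_+$; outermostness makes $\Sigma$ stable.

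Next, I would combine stability with the static vacuum identities to force the pointwise identity $|A|\,u\equiv 0$ on $\Sigma$. From $\Delta u=0$, $H_\Sigma=0$ and $D^2u=u\,\Ric$, one derives $\Delta_\Sigma u=-u\,\Ric(N,N)$ on $\Sigma$; the Gauss equation together with $s_g=0$ yields $\Ric(N,N)=-K_\Sigma-\tfrac12|A|^2$. Substituting $\phi=u$ into the stability inequality
\[
\int_\Sigma\bigl(|A|^2+\Ric(N,N)\bigr)\phi^2\ \le\ \int_\Sigma|\nabla_\Sigma\phi|^2
\]
and integrating by parts collapses the inequality to $\int_\Sigma|A|^2u^2\le 0$, hence $|A|\,u\equiv 0$ pointwise on $\Sigma$.

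This leaves two scenarios: either (a) $u\equiv 0$ on $\Sigma$, or (b) $\Sigma$ is totally geodesic ($A\equiv 0$) with $u>0$ somewhere. To exclude (b), I would apply the BMA conformal construction: set $\phi_\pm=(1\pm u)/2$ and $\hat g_\pm=\phi_\pm^4 g$ on $M_+$. Vacuum and harmonicity of $u$, via the 3-dimensional conformal scalar curvature formula, give $\hat s_\pm\equiv 0$; the expansion $u=1-m_{ADM}/r+O(r^{-2})$ yields $\hat m_+=0$; then a BMA-type doubling of $(M_+,\hat g_+)$ across $\Sigma$ by the conformally inverted and one-point compactified $(M_+,\hat g_-)$ produces a complete AF manifold of zero mass and nonnegative scalar curvature. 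Positive mass theorem rigidity forces this doubled manifold to be Euclidean, so $\hat g_+$ is flat and $g$ is Schwarzschild, contradicting $u>0$ on $\Sigma$. Therefore $u\equiv 0$ on $\Sigma$. With this in hand, $(M_+,g,u)$ is AF static vacuum with a connected non-degenerate Killing horizon as inner boundary, and BMA identifies it with the Schwarzschild exterior; since the Schwarzschild exterior has no manifold interior to its horizon, $\Sigma=\dm$.

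The main obstacle is the third step: the standard BMA doubling requires $\phi_+=\phi_-$ on the gluing surface (equivalently $u=0$ there), so adapting it to exclude the ``totally geodesic with $u>0$'' scenario requires a careful treatment of the conformal factors' signs, smooth extension of $\hat g_-$ at its compactified end, and compatibility of the boundary matching conditions across $\Sigma$. An alternative route would combine analyticity of static vacuum solutions with a Hopf-lemma argument on $\Sigma$, but either way this is the heart of the proof; all remaining steps are essentially bookkeeping around the static vacuum identities and the classical black hole uniqueness theorem.
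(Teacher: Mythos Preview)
Your opening matches the paper's: reduce to an outermost (hence stable) $\Sigma$, plug $f=u$ into the stability inequality, use $\Delta_\Sigma u=-u\,\Ric(N,N)$, and obtain $\int_\Sigma|A|^2u^2=0$. One minor gap: the pointwise identity $|A|\,u\equiv 0$ does not by itself give your dichotomy (a)/(b), since a priori $\{u=0\}$ and $\{A\neq 0\}$ could interlace. The paper closes this by observing that the static equations force $A=0$ wherever $u=0$, so in fact $A\equiv 0$ on all of $\Sigma$; then a first-eigenfunction argument gives either $u\equiv 0$ or $u>0$ everywhere on $\Sigma$.

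The substantive gap is exactly the one you flag. The BMA doubling glues $\hat g_+$ to $\hat g_-$ across $\Sigma$, and for the metrics to match one needs $\phi_+=\phi_-$ on $\Sigma$, i.e.\ $u=0$ there---precisely what you are trying to conclude. With $u>0$ on $\Sigma$ the two conformal pieces do not fit, no complete zero-mass manifold is produced, and the positive-mass rigidity step never fires. Your fallback ``analyticity plus a Hopf-lemma argument'' is too vague to assess; there is no evident barrier to which Hopf would apply.

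The paper handles case (b) by an entirely different mechanism, due to Galloway. Pass to the optical metric $\hat g=u^{-2}g$ and flow $\Sigma$ outward along $\hat g$-unit normals; one has the exact identity $\tfrac{d}{dt}(H/u)=-|A|^2$. Since $A=0$ and $H=0$ initially, the first three $t$-derivatives of $H/u$ vanish at $t=0$, and the outer-minimizing property applied at fourth order forces $\tfrac{d}{dt}A=0$ there as well. Unwinding this via the static equations gives $\Ric(N,N)=0$, hence $K_\Sigma=0$, hence $u$ and $N(u)$ constant on $\Sigma$. The Cauchy data on $\Sigma$ is then that of flat space with affine potential, and unique continuation (or analyticity) forces $(M,g)$ to be flat---contradicting the existence of a compact minimal $\Sigma$. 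This higher-variation argument is the key idea you are missing; the paper notes that a positive-mass route (due to Miao) also exists but is less direct.
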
 
 
 \begin{proof} This is basically a consequence of the well-known black hole uniqueness theorem \cite{I}, \cite{R}, \cite{BM} and its extension by an 
 elegant argument of Galloway \cite{G}. We refer also to \cite{Mi} for a different proof, (which relies however on the more difficult positive mass 
 theorem). 
 
   To begin, without loss of generality, by standard minimal surface arguments we may assume $\Si$ is outer-minimizing, cf.~\eqref{om} below, 
and so in particular $\Si$ is a stable minimal surface. By the $2^{\rm nd}$ variational formula for area 
\be \label{2nd}
\int_{\Si}|df|^2 - (|A|^2 + {\rm Ric}(N,N))f^2 \geq 0,
\ee
for all Lipschitz functions $f$ with $f \geq 0$; clearly \eqref{2nd} then also holds for all Lipschitz functions $f$. By the static vacuum equations 
\eqref{stat}, we have  
$$-{\rm Ric}(N,N) = -u^{-1}NN(u) = u^{-1}\D_{\Si}u ,$$
since $H = 0$, cf.~also the line preceding \eqref{Hamcon2}. Choosing $f = u$ in \eqref{2nd} gives
$$\int_{\Si} |du|^2 + u\D_{\Si}u - u^2|A|^2 \geq 0,$$
and so by the divergence theorem 
$$\int_{\Si}u^2|A|^2 = 0.$$
Again, the static vacuum Einstein equations imply that $A = 0$ at any locus where $u = 0$. Hence, it follows that 
\be \label{A0}
A = 0,
\ee
on $\Si$. If also $u = 0$ on $\Si$, then the result follows by the black hole uniqueness theorem, so assume $u$ is not identically $0$ on $\Si$. 
Since by the constraint equation \eqref{Hamcon}, $-2{\rm Ric}(N,N) = s_{\g_{\Si}}$, \eqref{2nd} becomes 
$$\int_{\Si} |df|^2 + K_{\Si}f^2 \geq 0.$$
Thus $u$ is a lowest eigenfunction of the operator $-\D + K_{\Si}$ and hence, (since $u$ is not identically $0$), $u > 0$ on $\Si$. 
 
   Now consider the (so-called) optical metric $\hat g = u^{-2}g$ on $(M, g)$ exterior to $\Si$ and the normal exponential map $\hat{exp}_{\Si}$ from $\Si$ 
in the metric $\hat g$. It is proved in \cite{G} that one has the monotonicity formula 
\be \label{mono}
\frac{d}{dt}(\frac{H}{u}) = -|A|^2,
\ee
where $d/dt$ is the $\hat g$-unit tangent to the flow lines of $\hat{exp}_{\Si}$; all other metric quantities in \eqref{mono} are with respect to $g$. 
(The sign for $H$ used here is opposite to that in \cite{G}). By \eqref{A0}, $\frac{d}{dt}(\frac{H}{u}) = 0$ at $t = 0$ and similarly 
$\frac{d^2}{dt^2}(\frac{H}{u}) = 0$ at $t = 0$. Thus 
\be \label{4}
\frac{d^3}{dt^3}(\frac{H}{u}) = -2|\frac{d}{dt}A|^2.
\ee
This corresponds to the $4^{\rm th}$ variational formula for area. By the outer-minimizing property the left side of \eqref{4} must be non-negative when 
integrated over $\Si$. Hence, it follows that 
$$\frac{d}{dt}A = 0,$$
at $t = 0$. By a standard computation, using the fact that $A = 0$,
$$\frac{d}{dt}A = -D^2u - R_N u = 0,$$
where $R_N(X,Y) = \<R(N,X)Y, N\>$. In 3 dimensions, one has $-R_N = {\rm Ric} + {\rm Ric}(N,N)g$, and so by the static vacuum equations 
\eqref{stat}, it follows that ${\rm Ric}(N,N) = 0$. In turn, by the Hamiltonian constraint \eqref{Hamcon}, this gives $K_{\Si} = 0$, which implies 
$\D_{\Si} u = 0$, cf.~\eqref{Hamcon2}. Hence $u = const$ on $\Si$. Also $N(u) = const$ on $\Si$ by the divergence constraint \eqref{dnu}. 
Thus the Cauchy data for $(M, g, u)$ at $\Si$ are the trivial data $(\g = flat, u = const, A = 0, N(u) = const)$. It then follows from unique continuation, 
or more simply just analyticity (static vacuum solutions with minimal boundary and $u > 0$ are analytic up to the boundary), that $(M, g)$ is flat 
and $u$ is an affine function. This again gives a contradiction; there are no compact minimal surfaces in $\bR^3$. 

 \end{proof} 
 
 \begin{remark} 
{\rm Proposition 2.1 is stable in the following sense: if $(M, g, u)$ is a static vacuum Einstein metric with boundary data $(\g, H)$ and 
$H$ is sufficiently small, and if the $C^{m,\a}$ norm, $m \geq 4$, of $(g, u)$ up to $\dm$ is uniformly bounded, then $(M, g, u)$ is close to a 
Schwarzschild metric $g_{Sch}(m)$ in $\bar \cE$. The proof of this is the same as that of Proposition 2.1, by taking a sequence of such metrics 
with boundary data $(\g_i, H_i) \to (\g, 0)$. 

   This statement will be significantly generalized in \S 3, cf.~Theorem 3.5. 
} 
\end{remark}

 Proposition 2.1 implies 
\be \label{Schdeg}
(\Pi_B)^{-1}({\rm Met}^{m,\a}(S^2), 0) = g_{Sch},
\ee
i.e.~ the inverse image of infinite dimensional space ${\rm Met}^{m,\a}(S^2) \times \{0\} \subset \w \cB$ is the 1-dimensional curve of 
Schwarzschild metrics. The linearized version of Proposition 2.1 implies that the image of $D_{g_{Sch}}\Pi_B$ has infinite codimension 
in $T\w \cB$ and so is not Fredholm. The Schwarzschild metrics lie at the boundary or edge of the space $\cE$ but the relation \eqref{Schdeg} 
shows this boundary is just a curve, not a hypersurface as one might expect. This indicates that the local behavior of  $\Pi_B$ must 
change significantly on approach to the locus $H = 0$ away from round metrics; this will be explored and discussed in more detail in \S 3. 
 
    Note also that the static vacuum equations \eqref{stat} are no longer strictly elliptic (in any gauge) at regions in the locus $\{u = 0\}$; instead 
the equations become degenerate elliptic. Namely, the leading order symbol for \eqref{stat} is $u\D g_{ij} = -2\p_i \p_j u$, $\D u = 0$ in a harmonic 
gauge for $g$, which is strictly elliptic only when $u > 0$. We note also that when $H > 0$ at the boundary $\dm$, a static vacuum solution 
$(M, g, u)$, $C^2$ smooth up to $\dm$ cannot have $u = 0$ at any point of $M$ or $\dm$. Namely, by \eqref{Hamcon2} 
 $$\D u + HN(u) = \frac{u}{2}(|A|^2 - H^2 + s_{\g}).$$
 If $u \geq 0$ on $\dm$ and $u(p) = 0$ at $p \in \dm$, then at $p$,  $\D u \geq 0$ and $N(u) > 0$ by the Hopf maximum principle. However, the 
 right side vanishes at $p$ giving a contradiction if $H(p) > 0$. 
    
      In sum, the map $\Pi_B$ is Fredholm only where $u > 0$ up to the boundary, and the requirement $H > 0$ at $\dm$ ensures this condition. 
All of the above are reasons for enforcing \eqref{H}. 
  
   \medskip 
 
    Points $(g, u) \in \cE$ where $D\Pi_B$ is an isomorphism are called regular or non-degenerate points of the map $\Pi_B$. Recall that boundary data 
$(\g, H) \in \cB$ is a regular value of $\Pi_B$ if every point in the inverse image $(\Pi_B)^{-1}(\g, H)$ is a regular point of $\Pi_B$ in $\cE$. By the 
Smale-Sard theorem \cite{Sm}, the regular values are generic (of second Baire category) in the target $\cB$. (Note that, by fiat, the empty set is a 
regular value). A point $(g, u)$ is singular (for $\Pi_B$) if it is not a regular point. 

  An important locus of singularities of a smooth index 0 Fredholm map $\cF: \cM \to \cN$ between Banach manifolds is a fold locus. This is given 
 by a (local) hypersurface $\cH \subset \cM$ (a codimension 1 submanifold) where $\cH$ consists of singular points of $\cF$ with 
 $dim {\rm Ker} \, D\cF = 1$ and the map $\cF$ is 2-1 in an open neighborhood of $\cH$ in $\cM$ off of $\cH$; compare with \eqref{UV}, \eqref{S1} 
 and the discussion following \eqref{eor} below. In the context of (Riemannian) Einstein metrics, the most well-known example of such fold 
behavior is the Hawking-Page phase transition for the AdS Schwarzschild metrics \cite{HP}. Similar fold behavior holds for the the interior 
Dirichlet boundary map for Ricci-flat Schwarzschild metrics \cite{AG}. Such behavior also occurs for the general interior Bartnik boundary 
map \cite{A5}, i.e.~for the analog of \eqref{bmap} where $M$ is replaced by a compact (interior) manifold with boundary. 

  It is natural to conjecture that generically, i.e.~at least on an open dense subset of $\cE$, the linearization $D\Pi_B$ is an isomorphism, 
i.e.~the regular or non-degenerate points in the domain $\cE$ are generic. This issue has been widely considered and analysed in many 
other geometric PDE (and ODE) problems. For example, for geodesics in Riemannian manifolds, this is the statement that the regular points 
of the exponential map $exp_p$ are open and dense in $T_pM$; generically, two points in $M$ are not conjugate along a geodesic, i.e.~there 
are no Jacobi fields vanishing at the end points. Similarly for minimal submanifolds, for a generic boundary in fixed Riemannian manifold, 
(or for a generic Riemannian metric), minimal surfaces have no Jacobi fields vanishing on the boundary. 

   In fact, based on an analogy with the monotonicity of the first eigenvalue of the Laplacian on bounded domains with Dirichlet boundary 
condition with respect to inclusion of domains, White \cite{Wh} proved that the set of minimal immersions with non-trivial Jacobi fields 
vanishing on the boundary is of codimension 1 in the space of all minimal immersions $(M, \dm) \to (N, g_N)$. This is of course a 
much stronger conclusion than generic. The proof in \cite{Wh} uses a variational approach, together with a normal interior variation of 
the domain of the minimal surface and the Calderon unique continuation theorem for elliptic systems. A similar result was proved for the 
interior Bartnik problem in \cite{A5} using an analogous approach, but  based on a unique continuation property for Einstein metrics and 
their linearization; this is considerably more difficult partly due to gauge issues.  
 
 Using essentially this same approach but with some modifications, An-Huang \cite{AH1}, \cite{AH2} have recently shown that the regular points 
 of $\Pi_B$ are indeed generic in many cases. 
 
\medskip 
 
   As discussed above, while the spaces $\cE$, $\cB$ and the boundary map $\Pi_B$ are well-behaved locally, this does not go very far towards 
resolving Conjecture II, the static vacuum extension conjecture. Consider for instance a very simple toy model. In place of $\cE$, consider the 
circle $S = S^1(r) \subset \bR^2$ of radius $r$ about the origin. Similarly, replace $\cB$ by the $x$-axis and the boundary $\Pi_B$ by the 
projection $\pi$ to the $x$-axis:
\be \label{S1}
\pi: S \to \bR, 
\ee
$$\pi(x,y) = x.$$
For $r$ small, the image of $\pi$ in $\bR$ is small; for most values in $\bR$ there is no solution. The space of boundary data realized by solutions 
increases as $r$ increases, but for any $r < \infty$, there is still a large space of boundary data without solutions. Only in the limit $r \to \infty$ where 
$S^1(r)$ becomes $\bR$ and $\pi$ becomes the identity (after suitable translations) does the situation improve. In all cases of course, the local 
structures involved are very well-behaved. There are only $2$ singular or degenerate points, both fold points, at $x = \pm r$; all other points in $S$ 
are regular.  

   In the example above, the degree of $\pi$ is $0$, so there are generically either 2 distinct solutions or no solutions. On the other hand, consider 
the following simple modification. Let $T$ be the graph of the function $\tan(x)$ over the interval $(-\frac{\pi}{2}, \frac{\pi}{2})$ and consider 
\be \label{T1}
\pi: T \to \bR,  
\ee
$$\pi(x,y) = x.$$
Then either there is a unique solution $y$ to $\pi(x, y) = x$ or there are no solutions. The degree of $\pi$ here is not well-defined. This is because 
the map $\pi$ is no longer proper; the issue of properness is fundamental in understanding the global behavior of non-linear mappings and will be 
discussed in more detail for the map $\Pi_B$ in $\S 3$. 

\medskip 
 
    One of the most effective methods of proving global existence and uniqueness is the continuity method, as mentioned above in the works on the 
solution to the Calabi conjecture and the solution to the Weyl embedding problem. In the current context, this method can be described as follows. 
Choose boundary data for which a static vacuum solution is known to exist; for instance $(\g_0, H_0) = (\g_{+1}, 2)$, which is realized by the exterior 
flat metric $M = \bR^3 \setminus B^3(1)$, $g = g_{Eucl}$ is the Euclidean metric and $u = 1$. Given any boundary data $(\g, H) \in \cB$, choose a 
path from $(\g_0, H_0)$ to $(\g, H)$ in $\cB$; for instance
$$(\g_s, H_s) = (1-s)(\g_0, H_0) + s(\g, H).$$
To prove there is a solution in $\cE$ with boundary data $(\g, H)$, one shows that the set of $s$ for which there is a solution is both open and 
closed in $[0,1]$. The openness property amounts to showing that solutions $(M, g_s, u_s)$ with $\Pi_B(M, g_s, u_s) = (\g_s, H_s)$ are always 
non-degenerate. In case the degeneracy locus is non-empty, for example a collection of fold hypersurfaces in $\cE$, then a very useful 
replacement or generalization of the method of continuity is the degree theory, e.g.~the Leray-Schauder degree or more generally 
the Smale degree \cite{Sm}. This does not require the non-degeneracy property, but to establish a well-defined degree does require 
the properness of the map $\Pi_B$, which is the analog of the closedness condition above.

\section{Global Behavior}

   The key issue in understanding the global behavior of $\Pi_B$ is to understand how well the boundary data $(\g, H)$ controls the 
behavior of any possible solution $(M, g, u)$ with such boundary data. As always with elliptic problems, this requires apriori estimates on the 
geometry of solutions $(M, g, u)$ in terms of $(\g, H)$. Proving such estimates is equivalent to proving that $\Pi_B$ is a proper map, 
i.e.~the inverse image $(\Pi_B)^{-1}(K)$ of compact sets $K \subset \cB$ are compact sets in the domain $\cE$. We point out that 
Fredholm maps are always locally proper, cf.~\cite{Sm}. 
 
   As a starting point of this analysis, we recall that one does have optimal apriori estimates of solutions $(M, g, u)$ in the interior of $M$, 
 away from $\dm$. Let $t(x) = dist_g(x, \dm)$ be the distance function to the boundary $\dm$ and let $\nu = \log u$. We recall that, by 
 definition, $u > 0$ in $M$. By \cite{A1}, there is a universal constant 
 $K < \infty$ such that 
\be \label{apri}
|{\rm Rm|}(x) \leq \frac{K}{t^2(x)}, \ \ |d \nu (x)| \leq \frac{K}{t(x)},
\ee
for any static vacuum solution $(M, g, u)$; here ${\rm Rm}$ is the Riemann curvature tensor of $g$. The estimates \eqref{apri} are scale invariant 
and such scale-invariant estimates also hold for all higher derivatives. It follows (from the Cheeger-Gromov compactness theorem) that if one 
has weak uniform control on the global geometry of the locus $L_1 = \{t =1\}$ in $(M, g)$, namely a lower bound on its area and upper bound 
on its diameter, 
\be \label{b1}
{\rm area}_g L_1 \geq v > 0, \ \ {\rm diam}_g L_1 \leq D < \infty,
\ee
then one obtains global uniform control on $(M, g, u)$ away from $\dm$. Under the bounds \eqref{b1}, 
any sequence of static vacuum solutions $(M, g_i, u_i) \in \cE$ has a subsequence which converges, in $C_{\d}^{\infty}$ and away from the 
boundary $\dm$ and modulo AF diffeomorphisms, to a limit AF static vacuum solution $(M, g, u)$, cf.~\cite{A2}, \cite{AK} for further 
details. In other words, one has good compactness properties away from the boundary - irrespective of the boundary conditions.

  Thus, the issue is the behavior of solutions arbitrarily near the boundary $\dm$. 

\medskip 

   The conjecture that $\Pi_B$ in \eqref{bmap} is a bijection (Conjecture II) suggests the closely related conjecture that $\Pi_B$ is a 
diffeomorphism. However, this turns out not to be the case. It was observed by the author in \cite{AK} that $\Pi_B$ is not a proper map, and 
so in particular is not a homeomorphism. The reason for this is simple: the structure of a manifold-with-boundary $(M, \dm)$ may 
degenerate with uniform control on the boundary data $(\g, H)$. In other words, the embedded boundary $\dm$ in $M$ 
may ``degenerate" from an embedding to an immersion. 

   As a simple illustration, consider for instance any fixed static vacuum metric $(M, g, u)$ with boundary $\dm$ and boundary data 
$(\g, H) \in \cB$. Let $\Si$ be any embedded $2$-sphere $S^{2} \subset M$ homologous to the boundary $\dm$ with $H_{\Si} > 0$. One 
may then cut off $M$ at $\Si$ to obtain a new exterior solution $M'$ with $\partial M' = \Si$ and 
corresponding boundary data $(\g_{\Si}, H_{\Si}) \in \cB$ at $\Si$. Suppose however $\Si$ is an {\it immersed} $2$-sphere in $(M, g)$ 
homologous to $\dm$. This again has induced ``boundary" data $(\g_{\Si}, H_{\Si})$ from $(M, g)$, but $\Si$ is not the natural smooth boundary 
of an exterior manifold-with-boundary $M'$. If the immersed surface $\Si$ is isotopic to the boundary $\dm$ in $M$ with $H_{\Si} > 0$, 
then at least in many cases one may easily find curves of spheres $\Si_{s}$ in $(M, g)$ which smoothly deform an embedded sphere 
to an immersed sphere, with boundary data $(\g_{s}, H_{s})$ smoothly controlled for all $s$ and with $H_s$ remaining uniformly positive. 
In this process, the ambient metric $g$ and potential $u$ remain fixed up to $\dm$; only the domain $M_s$ on which they are 
defined is changing. Thus, control of the ``boundary data" $(\g_s, H_s)$ does not control the structure of the manifold-with-boundary $M_s$. 
On the other hand, when $(M, g)$ is an extension of a smooth interior body $(\O, g_{\O})$, the corresponding interior geometry 
$(\O_s, g_{\O,s})$ is well-controlled and does not degenerate at all.

\begin{remark}  
{\rm It was proved in \cite{AJ} that this type of degeneration leads to counterexamples to the Bartnik minimization conjecture, Conjecture I. 
There exists a large family of immersed spheres $S^2$ in $\bR^3$ whose boundary data $(\g, H) \in \cB$ are not realized by a static vacuum 
extension of minimal mass; cf.~also \cite{A7} for further discussion and conjectures. 
}
\end{remark}

\begin{remark}
{\rm   One immediate consequence of this degeneration is a kind of symmetry breaking of the gauge group. As discussed in \S 2, the 
natural gauge group for the Bartnik boundary value problem is the group ${\rm Diff}_{1}(M)$ of AF diffeomorphisms 
of $M$ equal to the identity on the boundary $\dm$. This corresponds to isometric metrics, and preserves 
the boundary data $(\g, H)$. Suppose then $(\g, H)$ are realized as boundary data of a static vacuum solution 
$(M, g, u)$, with $\dm$ embedded. Then clearly so are $(\f^{*}\g, \f^{*}H)$, for any diffeomorphism $\f$ of 
$\dm \simeq S^{2}$. Namely any such diffeomorphism extends to an AF diffeomorphism, also called $\f$, of $M$ 
and the triple $(M, \f^{*}g, \f^{*}u)$ is a solution of the static vacuum Einstein equations with boundary data $(\f^{*}\g, \f^{*}H)$. If however 
$\Si$ is merely an immersed ``boundary" in a larger space $M$, then this may not be the case. It is 
no longer clear that if $(\g, H)$ are the boundary data of $\Si$, then $(\f^{*}\g, \f^{*}H)$ are realized as 
boundary values of a static vacuum solution, for general diffeomorphisms $\f$ of $\Si \simeq S^{2}$. 
Thus the symmetry group of the problem has been reduced in the immersed case. 

  This breakdown of diffeomorphism invariance of the problem implies that one must work in a 
suitable fixed gauge. One can no longer deal with metrics up to isometry (i.e.~moduli spaces of 
metrics or solutions to the equations) but only with metrics in a fixed gauge, i.e.~(local) coordinate 
system. It is worth noting that the Weyl metrics, discussed in detail in \S 4, do have a preferred global gauge (coordinate system). 
}
\end{remark}

 Let ($M, g_i, u_i) \in \cE$ be a sequence of static vacuum solutions for which the boundary data $(\g_i, H_i) \in \cB$ converge to a limit 
 $(\g, H)$, $H \geq 0$ in $\w \cB$. There are only two ways that the sequence can degenerate, i.e.~fail to have a subsequence which 
 converges to a limit $(M, g, u) \in \w \cE$, cf.~\eqref{PiB2}). First the structure of an exterior manifold with boundary may break down as 
 above; this occurs only if there is no uniform lower bound on the distance to the cut locus of the $g_i$-normal exponential map of $\dm$ into 
 $(M, g_i)$. Second, the norm of the Riemann curvature ${\rm Rm}$ of $g_i$ may tend to infinity at or arbitrarily near  $\dm$, 
\be \label{Rinf}
|{\rm Rm}_{g_i}(x_i)| \to \infty,
\ee
for some sequence such that $t(x_i) \to 0$, (cf.~\cite{AK}). Of course it is possible that both of these behaviors occur simultaneously. 
We note that a special case of the analysis given in \cite{AK} shows that \eqref{Rinf} does not occur if there are uniform lower and 
upper bounds on the potential $u$ at $\dm$, i.e.~constants $m, M$ such that 
$$0 < m \leq u_i \leq M < \infty \ \ {\rm on} \ \ \dm.$$

  There is one, seemingly natural, condition that can be used to rule out both of these possibilities of degeneration at once. Thus, recall that the 
boundary $\dm$ is called outer-minimizing in $(M, g)$ if for any surface $S \subset M$ surrounding $\dm$, i.e.~$S$ is homologous to $\dm$ 
in $M$, one has 
 \be \label{om}
 \area_g S \geq \area_g \dm.
 \ee
 The boundary $\dm$ is strictly outer-minimizing if strict inequality holds in \eqref{om}, for all $S \neq \dm$. 
 Note that by the first variational formula for area, \eqref{om} implies \eqref{H}, $H \geq 0$ at $\dm$. It is obvious but important to 
 note that the outer-minimizing condition depends on the global structure of the solution $(M, g)$; it cannot be expressed only in terms 
 of the boundary data $(\g, H)$.

  The following result was proved by the author in \cite{AK}. 
  
\begin{theorem} Let $(M, g_i, u_i) \in \bar \cE$ be a sequence of static vacuum Einstein metrics with boundary data $(\g_i, H_i)$ with 
$H_i \geq 0$, $m \geq 4$, such that 
 $$(g_i, H_i) \to (\g, H),$$
in $\w \cB$. Suppose also that $\dm$ is outer-minimizing in $(M, g_i)$ for each $i$:
  $$area_{g_i}S \geq area_{g_i}\dm,$$
for  all $S$ surrounding $\dm$. 
Then a subsequence converges in $\w \cE$ to a limit static vacuum Einstein metric $(M, g, u) \in \bar \cE$ with boundary data $(\g, H)$. 
\end{theorem}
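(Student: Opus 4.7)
The strategy is to rule out along the sequence both of the two degeneration modes identified after the theorem statement: (i) collapse of the normal exponential map of $\dm$, i.e.~loss of the manifold-with-boundary structure, and (ii) curvature blow-up arbitrarily close to $\dm$. Once both are excluded, the interior estimates \eqref{apri} together with elliptic boundary regularity for the static vacuum system promote the $C^{m,\a}$ convergence of boundary data to uniform $C^{m,\a}$ convergence of $(g_i,u_i)$ up to $\dm$, whence Arzela-Ascoli yields the required subsequential limit. Away from the boundary, on any region $\{t \geq \delta\}$, the scale-invariant estimates \eqref{apri} for $|\mathrm{Rm}_{g_i}|$ and $|d\nu_i|$, together with the weak geometric control \eqref{b1} on the level set $L_1 = \{t=1\}$ that follows from boundary-data convergence and AF normalization, already yield Cheeger-Gromov compactness on $\{t > 0\}$; so the entire issue is the behavior as $t \to 0$.

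To rule out (i), suppose along a subsequence the focal/cut distance of $\dm$ in $(M, g_i)$ tends to $0$. The $C^{m,\a}$ convergence $\g_i \to \g$, together with the Codazzi identity and the boundary system coming from \eqref{stat} and \eqref{H}, yields uniform $C^{m-1,\a}$ control of the second fundamental form of $\dm$ in $g_i$, precluding focal points at vanishing distance. The only remaining possibility is that two distinct normal geodesics from $\dm$ meet at a point $p_i$ with $t(p_i) \to 0$; a standard neck-surgery near $p_i$ then produces a surface $S_i$ homologous to $\dm$ with $\area_{g_i}(S_i) < \area_{g_i}(\dm)$, contradicting the outer-minimizing hypothesis \eqref{om}.

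To rule out (ii), suppose $|\mathrm{Rm}_{g_i}|(x_i) = \lambda_i^2 \to \infty$ with $t_i = t(x_i) \to 0$. The interior estimate \eqref{apri} forces $\lambda_i t_i \leq \sqrt{K}$, so after extracting a subsequence $\lambda_i t_i \to T \in [0,\sqrt{K}]$. Rescale by $\lambda_i$: the pointed sequence $(M, \lambda_i^2 g_i, x_i)$ has curvature $1$ at $x_i$, boundary at $\lambda_i^2 g_i$-distance $T$ from $x_i$, and rescaled boundary data $(\lambda_i^2 \g_i, \lambda_i^{-1} H_i)$. Since $(\g_i,H_i)$ is $C^{m,\a}$-bounded, in the blowup the rescaled boundary metric is locally Euclidean, the rescaled mean curvature tends to $0$, and the rescaled second fundamental form (bounded by step (i)) also tends to $0$. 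The interior estimates and the boundary control just obtained allow extraction of a further subsequential blowup limit $(N, g_\infty, u_\infty, x_\infty)$, a static vacuum metric with a locally minimal, totally geodesic boundary at distance $T$ from $x_\infty$, $|\mathrm{Rm}|(x_\infty) = 1$, and an inherited local outer-minimizing property. Running the argument of Proposition~2.1 on this blowup -- the stability inequality \eqref{2nd} applied with test function $u_\infty$, the monotonicity \eqref{mono}, and unique continuation -- forces $(N, g_\infty)$ to be flat with $u_\infty$ affine, contradicting $|\mathrm{Rm}|(x_\infty) = 1$.

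With both degenerations excluded, the normal exponential map of $\dm$ in $(M, g_i)$ has a uniform injectivity radius $\delta_0$ and $|\mathrm{Rm}_{g_i}|$ is uniformly bounded on $\{t \leq \delta_0\}$. Writing the static vacuum system \eqref{stat} in a boundary-harmonic gauge adapted to $\dm$, one obtains a uniformly elliptic system with converging $C^{m,\a} \times C^{m-1,\a}$ Bartnik boundary data, and Schauder theory for boundary value problems then yields uniform $C^{m,\a}$ control of $(g_i, u_i)$ up to $\dm$; Arzela-Ascoli extracts the subsequential limit $(M, g, u) \in \bar \cE$ with boundary data $(\g, H)$. I expect the main obstacle to lie in the blow-up argument (ii): both tracking how the global outer-minimizing condition descends to a usable local property of the blowup limit, and extending the rigidity conclusion of Proposition~2.1 to the non-AF blowup setting (where the limit boundary is planar rather than spherical and $u_\infty$ need not tend to $1$ at infinity) are the crucial non-trivial inputs, whereas the elliptic estimates used in (i) and in the final boundary-regularity step are comparatively standard.
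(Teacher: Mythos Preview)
The paper does not actually prove this theorem; it is quoted from \cite{AK} and only invoked (its proof is re-used in parts of Theorem~3.5). So there is no in-paper argument to compare against line by line. That said, your overall architecture---rule out the two degeneration modes, then apply elliptic boundary regularity---matches the strategy one infers from the paper's later use of the result. The gap is in how you execute step~(i).

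Your claim that ``the $C^{m,\a}$ convergence $\g_i \to \g$, together with the Codazzi identity and the boundary system coming from \eqref{stat} and \eqref{H}, yields uniform $C^{m-1,\a}$ control of the second fundamental form of $\dm$'' is not justified and is essentially circular. The Bartnik data $(\g,H)$ fixes only the trace of $A$; the Codazzi/divergence constraint \eqref{dnu}, $\d(uA) + u\,dH = -dN(u)$, is an underdetermined first-order relation among $A$, $u$, and $N(u)$, and the Hamiltonian constraint \eqref{Hamcon} couples $|A|^2$ to ${\rm Ric}(N,N)$, i.e.~to the very ambient curvature you are trying to bound. Without independent control of $u$, $N(u)$, or $|{\rm Rm}|$ near $\dm$, none of these identities yields a pointwise bound on $A$. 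Moreover, even granting an $A$-bound, excluding focal points at vanishing distance also requires a bound on the ambient sectional curvature along the normal geodesics---again the quantity in question. So as written, step~(i) assumes what you want to prove.

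What the paper actually extracts from the outer-minimizing hypothesis (visible in the proof of Theorem~3.5 and the Appendix) is quite different: outer-minimizing gives stability of $\dm$, and the second-variation inequality \eqref{2nd2} combined with Gauss--Bonnet yields the scale-invariant \emph{integral} bound \eqref{Ab}, $\int_{\dm}|A|^2 \leq 8\pi + O(1)$, together with the $L^2$ bound \eqref{nub} on $d^T\nu$. These integral bounds, not pointwise $A$-control, are what survive under blow-up and feed the rigidity argument. The non-collapse input (your missing volume lower bound in the blow-up) comes from the conformal metric $\w g = u^2 g$ having ${\rm Ric}_{\w g} \geq 0$ and Bishop--Gromov, as in Lemma~3.6. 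Pointwise control of $A$ only appears \emph{after} the curvature bound is in hand, via the regularity bootstrap of the Appendix (Proposition~5.2). Your step~(ii) and your closing caveat correctly identify the hard part---carrying outer-minimizing to the blow-up and replacing the AF black-hole uniqueness of Proposition~2.1 by a Liouville-type rigidity for complete static vacuum metrics with planar minimal boundary---but you should rebuild step~(i) around the integral estimates above rather than an unproved pointwise $A$-bound.
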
 

  Thus, one has good compactness properties under an outer-minimizing condition. 
  
\medskip 
  
    In light of Theorem 3.3, it might seem beneficial to restrict to static vacuum solutions with (strictly) outer-minimizing boundary. Let then 
$\cE^{out} \subset \cE$ be the space of static vacuum solutions for which the boundary is strictly outer-minimizing:
\be \label{stout}
{\rm area}_g S > {\rm area}_g \dm,
\ee
for $S \neq \dm$. This is clearly an open condition so that $\cE^{out}$ remains a smooth Banach manifold. The map 
\be \label{Piout}
\Pi_B : \cE^{out} \to \cB,
\ee
is still well-defined, smooth and Fredholm, of index $0$. 

  In fact it is not unusual in the literature to modify the definition of the Bartnik mass by replacing the no horizon condition (no minimal 
surfaces surrounding the boundary) by an outer-minimizing condition. Thus define 
\be \label{bmo}
m_{B}^{out}(\O, g_{\O}) = \inf  \{m_{ADM}(M, g): (M, g) \in \cP_{\O}^{out}\},
\ee
where $\cP_{\O}^{out}$ is the space of strictly outer-minimizing extensions of $(\O, g_{\O})$ or equivalently of the boundary data $(\g, H)$; 
cf.~\cite{J} for a detailed discussion. Similarly, the minimizing and static extension Conjectures I and II are then 
rephrased to assert the existence and uniqueness of static vacuum solutions with strictly outer-minimizing boundary. One advantage of 
this modified definition is the fundamental estimate of Huisken-Illmanen \cite{HI}, 
$$m_B^{out}(\g, H) \geq m_{H}(\g, H) = \sqrt{\frac{{\rm area}_g \dm}{16\pi}}(1 - \frac{1}{16\pi}\int_{\dm}H^2),$$
comparing the outer-minimizing Bartnik mass with the Hawking mass of the boundary. 
  
  The problem with these reformulations say of Conjecture II is that one needs the strict inequality \eqref{stout} to have a satisfactory 
manifold structure to the space $\cE^{out}$ of solutions. Working instead with the (weak) outer-minimizing condition leads, at best, 
to the structure of a Banach manifold with boundary; this is not suitable  to try to apply general existence methods, such as 
the method of continuity or the more general degree theory techniques discussed briefly in \S 2. 

   On the other hand, in passing to limits of minimizing sequences, (assuming such limits exist), the strict outer-minimizing property is not preserved; 
limits can be expected to be only (weakly) outer-minimizing. Thus for a limit minimizer to be an admissible extension, one needs to work with the 
(closed) weak outer-minimizing condition. In a similar vein, the map $\Pi_B$ in \eqref{Piout} cannot possibly be proper. 
One can easily find sequences $(M, g_i, u_i) \subset \cE^{out}$ with $(\g_i, H_i) \to (\g, H) \in \cB$ smoothly, but which 
have no limit $(M, g, u)$ in $\cE^{out}$. In fact it is easy to find such sequences of boundaries in a fixed background solution $(M, g, u)$) 
with limit only weakly outer-minimizing 
$$\area_g S \geq \area_g \dm.$$
Thus, neither the weak or strict outer-minimizing condition seems satisfactory. We are not aware of any way out of this quandary. 
    
\begin{remark}  
{\rm  One might raise the same objection with the no-horizon condition itself; if a static vacuum solution had a horizon, (i.e.~a minimal 
surface surrounding $\dm$), then it could not  be an admissible extension and so could not realize the Bartnik mass as in Conjecture I. 
One would have to consider only static vacuum solutions without horizons, which would be very awkward. Fortunately, 
Proposition 2.1 rules out this situation. 
 
  }
  \end{remark} 

   Of course, the main trouble here is that the outer-minimizing property (strict or weak) cannot be phrased in terms of the boundary data $(\g, H)$. 
There has been some speculation that suitable conditions on the boundary data $(\g, H)$ would ensure that static vacuum solutions 
with such boundary data have strict outer-minimizing boundary; a common suggestion, based on the resolution of the Weyl embedding problem, 
is the condition 
$$K_{\g} > 0 \ \ {\rm and} \ \ H > 0.$$
However, the next result shows that this cannot possibly work, at least for $H$ small. 
   
  Using Theorem 3.3, we prove the following non-existence result. This may be viewed as a stable version of the black hole uniqueness theorem, 
as in Proposition 2.1, with $H$ small replacing $H = 0$. 

\begin{theorem}  Let $\g$ be a smooth metric on $S^{2}$ such that the Gauss curvature $K_{\g}$ is a Morse function 
in the domain $K_{\g} > 0$, i.e.~$K_{\g}$ has only non-degenerate critical points when $K_{\g} > 0$.  

  Let $\mu(p) = \min |\l_i(p)|$, where $\l_i(p) \neq 0$ are the eigenvalues of the Hessian $D^2K_{\g}$ at a critical point $p$ of $K_{\g}$, 
$K_{\g}(p) > 0$. Let also $C = ||\g||_{C^{m,\a}} + ||H||_{C^{m-1,\a}}$, for some $m \geq 4$. 

   Then there is a constant $H_0 > 0$, depending only on $C$ and a positive lower bound for $\mu_0 = \min_p \mu(p) > 0$, such that:  
for any smooth function $H$ satisfying
\be \label{H0}
0 < H < H_{0},
\ee
pointwise, the Bartnik boundary data $(\g, H)$ are not the boundary data of a static vacuum Einstein metric $(M, g, u) \in \cE$. Thus 
for such $(\g, H) \in \cB$, 
$$\Pi_B^{-1}(\g, H) = \emptyset.$$
\end{theorem}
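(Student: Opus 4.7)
The plan is to argue by contradiction, combining the compactness result of Theorem 3.3 with the rigidity of Proposition 2.1; the Morse hypothesis on $K_\g$ is then used to rule out the only possible limit, namely a Schwarzschild metric. Suppose no such $H_0$ exists. Then there is a sequence $(\g_n, H_n) \in \cB$ with $\|\g_n\|_{C^{m,\a}} + \|H_n\|_{C^{m-1,\a}} \leq C$, $\mu_0(\g_n) \geq \mu_0 > 0$, and $\max_{S^2} H_n \to 0$, each realized by a static vacuum extension $(M, g_n, u_n) \in \cE$. By Arzela--Ascoli, after passing to a subsequence, $\g_n \to \g_\infty$ in some $C^{m,\a'}$ norm with $\a' < \a$, preserving $\mu_0(\g_\infty) \geq \mu_0$, while $H_n \to 0$.

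The main step, and where the principal difficulty lies, is to establish compactness of $(M, g_n, u_n)$ up to the boundary. I would first show that $\dm$ is weakly outer-minimizing in each $(M, g_n)$. If not, then standard area minimization among surfaces surrounding $\dm$ in the AF manifold $(M, g_n)$ produces an outer-minimizing hull $\Si_n$ with $\area_{g_n} \Si_n \leq \area_{g_n} \dm$, smooth and of vanishing mean curvature where disjoint from $\dm$. The boundary regularity theorem for minimal surfaces in static vacuum Einstein metrics proved in the Appendix rules out $\Si_n$ touching $\dm$, since $H_n > 0$ there; hence $\Si_n$ is a smooth minimal surface strictly in the interior surrounding $\dm$. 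But then Proposition 2.1 forces $(M, g_n, u_n)$ to be Schwarzschild with $\dm$ its horizon, contradicting $H_n > 0$ on $\dm$. With $\dm$ outer-minimizing, Theorem 3.3 applies and a subsequence of $(M, g_n, u_n)$ converges in $\w \cE$ to a limit $(M, g_\infty, u_\infty) \in \bar \cE$ with boundary data $(\g_\infty, 0)$.

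On the limit, $\dm$ is itself a minimal surface surrounding $\dm$, so Proposition 2.1 forces $(M, g_\infty, u_\infty)$ to be Schwarzschild; in particular $\g_\infty$ is a round metric on $S^2$ with constant positive Gauss curvature. A constant function has no non-degenerate critical points, so the hypothesis that $K_{\g_\infty}$ is Morse on $\{K_{\g_\infty} > 0\}$ with smallest Hessian eigenvalue bounded below by $\mu_0 > 0$ is violated. The quantitative dependence of $H_0$ on $C$ and $\mu_0$ is built into this compactness scheme: the threshold below which no solution exists is uniform over the class $\{(\g, H) : \|\g\|_{C^{m,\a}} + \|H\|_{C^{m-1,\a}} \leq C,\ \mu_0(\g) \geq \mu_0\}$, producing $H_0 = H_0(C, \mu_0)$.

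The hard part, as indicated above, is the outer-minimizing step. An a priori area-minimizer of surfaces surrounding $\dm$ could touch $\dm$ along part of $\dm$, in which case it is not a globally smooth minimal surface and Proposition 2.1 does not directly apply. This is exactly the situation the Appendix regularity result is designed to handle: in a static vacuum metric with $H > 0$ on $\dm$, an outside area-minimizer cannot touch the boundary, so the hull is either $\dm$ itself (outer-minimizing case) or a smooth interior minimal surface to which Proposition 2.1 does apply.
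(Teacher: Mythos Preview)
Your outline is correct for the outer-minimizing case, and that part matches the paper exactly. The genuine gap is in your treatment of the non-outer-minimizing case, which is where essentially all of the work in the paper lies.

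Your claim that ``the boundary regularity theorem \ldots\ proved in the Appendix rules out $\Si_n$ touching $\dm$, since $H_n > 0$ there'' is not correct, and it misreads what the Appendix does. Proposition~5.2 is a regularity statement: given a local minimal surface $V$ in a static vacuum metric with an a~priori bound on $|s_\g| + |\mathrm{Rm}_g|$, the geometry of $V$ is controlled in $C^\infty$. It says nothing about whether an outer-minimizing hull can meet $\dm$. In fact the hull generically \emph{does} meet $\dm$: this is the standard obstacle-problem picture. The hull $\Si$ decomposes as $\Si = \bar U \cup \bar V$ where $U \subset \dm$ is the coincidence set (on which $H_\Si = H_n > 0$) and $V$ is the free minimal piece (on which $H_\Si = 0$), with $\Si$ only $C^{1,1}$ across the seam. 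Proposition~2.1 shows $U \neq \emptyset$ and the assumption that $\dm$ is not outer-minimizing gives $V \neq \emptyset$, so neither degenerate alternative in your dichotomy occurs. The strong maximum principle argument you have in mind (minimal surface touching a surface with $H>0$) does not apply at the seam, and on the interior of $U$ the hull simply \emph{is} $\dm$.

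What the paper actually does in this case is pass to the exterior $\hat M_i$ of $\Si_i$ and attempt to rerun the outer-minimizing argument there. For this one needs uniform $C^{4,\a}$ control on the induced metric of $\Si_i$, in particular on the minimal pieces $V_i$, and this is far from automatic: one has no a~priori curvature bound on $V_i$ near $\dm$. The paper establishes the bound \eqref{Kbound} via a blow-up argument occupying Lemmas~3.6--3.8 (non-collapse via the $\tilde g = u^2 g$ metric and Bishop--Gromov, quadratic area growth, and asymptotic flatness of the blow-up boundary), together with the Appendix regularity to upgrade $C^{1,\a}$ convergence to $C^2$ convergence so that $Q$ passes to the limit. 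Only after all of this can one conclude that $(\hat M_i, g_i, u_i)$ subconverges to a Schwarzschild metric and derive the Morse contradiction on the piece $U_i \subset \dm$. Your proposal skips this entire step.
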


   We note that at least a large class of boundary data satisfying \eqref{H0} above do have positive scalar curvature fill-ins, i.e.~are 
boundary data of compact domains $(\O, g_{\O})$ with positive scalar curvature. 

\begin{proof} The proof is by contradiction. Thus, let $(\g_{i}, H_{i}) \in \cB$, ($H_{i} > 0$), be a sequence of boundary data 
converging in $\w \cB$ to $(\g, 0) \notin \cB$ and satisfying the assumptions of Theorem 3.5. Suppose there is a sequence of 
static vacuum solutions $(M, g_{i}, u_{i})$, $u_{i} > 0$ with Bartnik boundary data $(\g_i, H_i)$. We then need to derive a 
contradiction. 

  In case the boundary $\dm$ is (weakly) outer-minimizing in $(M, g_{i})$ for $i$ sufficiently large, Theorem 3.5 is proved in \cite{A7}, 
based on the discussion in \cite{AJ}. We sketch the argument for completeness. Since $\dm$ is outer-minimizing, Theorem 3.3 implies 
that a subsequence of $(M, g_{i}, u_{i})$ converges in $\w \cE$ to a limit solution $(M, g, u) \in \w \cE$ of the static vacuum Einstein 
equations with $u \geq 0$ and $u \to 1$ at infinity.  The Bartnik boundary data 
of $(M, g, u)$ is given by 
$$\lim_{i\to \infty} (\g_{i}, H_{i}) = (\g, 0).$$

  By the black hole uniqueness theorem as in Proposition 2.1, the only static vacuum solution $(M, g, u)$ with 
$u \geq 0$, $u \to 1$ at infinity and smooth horizon boundary $H = 0$ is the Schwarzschild metric with boundary data $(\g_{2m}, 0)$. 
Since the Gauss curvature $K_{\g_{2m}} \equiv (2m)^{-2}$ is not a Morse function, this gives a contradiction. Namely the smooth convergence 
of $(M, g_i, u_i)$ up to the boundary implies that $K_i$ is smoothly close to the constant $(2m)^{-2}$; however, the eigenvalues of the 
Hessian $D^2K_i$ are bounded away from $0$, say at $\max K_i$. 

  Now in general, consider a static vacuum solution $(M, g, u)$ (say equal to some $(M, g_i, u_i)$ above) with boundary data $(\g, H)$ with 
$H > 0$ but close to $0$. Let $\Si$ be the outer-minimizing hull of $\dm$ in $M$, cf.~\cite{HI}. This is the embedded surface surrounding 
$\dm$ in $(M, g)$ of smallest area. Note first that by Proposition 2.1, $\Si \subset M$ cannot be disjoint from $\dm$, so 
$$\Si \cap \dm \neq \emptyset.$$
Let $U \subset \dm$ be the interior of $\dm \cap \Si$ and let $V = \Si \setminus \bar U \subset \Si$. Then (again by Proposition 2.1), 
both $U$ and $V$ are non-empty open sets; of course, neither $U$ nor $V$ is assumed to be connected. By the outer-minimizing property, one has 
$$H = 0 \ \ {\rm on} \ \ V.$$
The surface $\Si$ is $C^{1,1}$ at the seam or corner $\p U \cap \p V$, smooth away from $\p U \cap \p V$ and has non-negative 
distributional mean curvature $H$ across $\p U \cap \p V$, cf.~\cite{HI}. 

  Now given $(M, g_i, u_i)$ as above, consider the new sequence of static vacuum solutions $(\hat M_i, g_i, u_i)$ where 
$\hat M_i \subset M$ is the region of $M$ exterior to $\Si_i$, so $\partial \hat M_i = \Si_i$. The analysis to follow will be to show that 
that the same arguments as before where $\dm_i$ was outer-minimizing can be applied to the new sequence $(\hat M_i, g_i, u_i)$ to 
again obtain a contradiction. 
  
  Let $\g_{U_i} = g_i |_{U_i}$ and $\g_{V_i} = g |_{V_i}$. The metric $\g_{U_i}$ is uniformly controlled in $C^{m,\a}$ by hypothesis, 
since $U_i \subset \dm_i$. Now assume first, for simplicity, that the sequence $(V_i, \g_{V_i})$ is uniformly bounded in $C^{4,\a}$ 
(modulo diffeomorphisms), so that the Gauss curvature $K_i$ of $(V_i, \g_{V_i})$ is uniformly bounded in $C^{2,\a}$. The same 
then holds for the induced metric $\hat \g_i$ on $\p \hat M_i$, away from the seam or corner $\p V_i \cap \p U_i \subset \Si_i$. 
Thus away from the corners, one has $H_i \to 0$ in $C^{3,\a}$ with $\hat \g_i$ bounded in $C^{4,\a}$. 
One may then still apply the proof of Theorem 3.3 in this setting to conclude that the curvature ${\rm Rm}_{g_i}$ and its derivatives 
up to order $2+\a$ are uniformly bounded in $\hat M_i$, up to the boundary $\p M_i$. Thus, in a subsequence, $(\hat M_i, \hat g_i, \hat u_i)$ 
converges in $C^{4,\a}$ away from corners to a round Schwarzschild metric, cf.~also the proof of Proposition 2.1 and Remark 2.2. In 
particular, this convergence holds in $U_i \subset \dm_i$ and so gives the same contradiction as above - even if components of $U_i$ 
shrink to points for instance. 

  Now the surfaces $(V_i, \g_{V_i})$ are stable minimal surfaces in $(M, g_i)$. In the Appendix, we prove a regularity result for such stable 
minimal surfaces in static vacuum Einstein manifolds which implies that if both the scalar curvature $s_{V_i}$ of $(V_i,  \g_{V_i})$ and the 
full Riemann curvature ${\rm Rm}$ of $(M, g_i)$ are uniformly bounded along $V_i$,  
 \be \label{Kbound}
 |s_{V_i}| + |{\rm Rm}_{g_i}| \leq C
 \ee
 on $V_i$, then in fact $(V_i, \g_{V_i})$ is uniformly bounded in $C^{\infty}$ modulo diffeomorphisms. Note this statement is standard for 
the trivial static vacuum Einstein metric $\bR^3$. It also follows easily in regions of $V_i$ where $t_i = dist_{g_i}(\dm_i, \cdot)$ is bounded 
away from $0$, by the estimates \eqref{apri}. Thus, the proof follows just as above if \eqref{Kbound} holds, and so it suffices to 
prove \eqref{Kbound}. 

  We prove \eqref{Kbound} by a blow-up argument. The discussion below applies to each $(\hat M_i, g_i, u_i)$ but we drop the index $i$ from the 
notation; the estimates below are then understood to hold uniformly, for all $i$ large. 

  To begin, the $2^{\rm nd}$ variation of area gives 
\be \label{2nd2}
\int_{\Si}(|df|^2 + (N(H)+ H^2)f^2) dv_{\g_{\Si}} \geq 0, 
\ee
and the normal variation of $H$ is given by $N(H) = -|A|^2 - {\rm Ric}(N,N)$. By the Hamiltonian constraint (Gauss) equation \eqref{Hamcon},  
$|A|^2 - H^2 + s_{\g} = -2{\rm Ric}(N,N)$, 
so that 
$$N(H) = -{\tfrac{1}{2}}(|A|^2 + H^2 - s_{\g_{\Si}}).$$
Choosing $f = 1$ in \eqref{2nd2} thus gives the bound 
$$\int_{\Si}|A|^2 \leq \int_{\Si}s_{\g_{\Si}} + H^2  = 4\pi \chi(\Si) + \int_{\Si}H^2.$$
Since $H^2 \to 0$ uniformly, it follows that one has the uniform bound 
\be \label{Ab}
\int_{\Si}|A|^2 \leq 8\pi + 1.
\ee
Note this bound is scale invariant. Next, again from the Gauss equation, cf.~\eqref{Hamcon2}, one has 
$$|A|^2 - H^2 + s_{\g} = 2u^{-1}(\D u + HN(u)).$$
Integrating over $\Si$ and applying divergence formula gives 
$$\int_{\Si}|d^T \nu|^2 \leq 8\pi - \int_{\Si}HN(\nu),$$
where $d^T\nu$ is the tangential gradient of $\nu$ on $\Si$. On the domain $V \subset \Si$, $H = 0$ while on the locally 
outer-minimizing complement $U \subset \dm$, $HN(\nu)$ converges to a well-defined limit by Theorem 3.3, cf.~\cite{AK} 
for details. It follows that 
\be \label{nub}
\int_{\Si}|d^T\nu |^2 \leq C,
\ee
for some uniform $C$, independent of $i$. 

  Next, for the blow-up argument, choose points $p_i \in \Si_i$ realizing the maximum of 
\be \label{Q}
Q_i(x)= |s_{\g_{V_i}}(x)|  + |{\rm Rm}_{g_i}(x)|,
\ee
$x \in \Si_i$. Without loss of generality, assuming $Q_i (p_i)\to \infty$, we rescale the metrics $g_i$ (and correspondingly $\g_{V_i}$) by 
$$\bar g_i = Q_i(p_i) g_i,$$
so that 
\be \label{Qmax}
\bar Q_i \leq 1 \ \ {\rm and} \ \ \bar Q_i (p_i)= 1,
\ee 
on $\Si_i$. Clearly one has $p_i \in V_i$. We also renormalize $u$ to $\bar u_i = u_i / u(p_i)$. Again by (the proof of)  
Theorem 3.3, $|{\rm Rm}_{\bar g_i}|$ is uniformly bounded on $(\hat M_i, \bar g_i)$. 
 
   To prove that a subsequence $(\hat M_i, \bar g_i, p_i)$ converges modulo diffeomorphisms, (i.e.~in the Cheeger-Gromov sense), 
in $C^{1,\a}$, we need to rule out the possibility of collapse of the metrics, i.e.~the collapse of the volume of unit balls at $p_i$. 

\begin{lemma} (Non-collapse). For the rescaled sequence $(\hat M_i, \bar g_i, p_i)$, there is a uniform lower bound 
\be \label{v1}
{\rm vol}_{\bar g_i}B_{p_{i}}(1) \geq v_0 > 0.
\ee
Similarly, the boundary geometry $(\Si_i, \bar \g_i, p_i)$ does not collapse: 
\be \label{v2}
{\rm area}_{\bar \g_i}D_{p_i}(1) \geq a_0 > 0.
\ee 
\end{lemma}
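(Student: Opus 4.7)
The plan is to establish the two-dimensional area lower bound \eqref{v2} first, and then deduce the three-dimensional volume lower bound \eqref{v1} from it by a tubular neighborhood argument. The key input in both steps is that the rescaling has rendered all relevant curvatures pointwise bounded, so standard comparison geometry can be applied.

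For \eqref{v2}, the crucial point is that after rescaling by $Q_i(p_i)$, both the ambient Riemann curvature and the intrinsic Gauss curvature along $V_i$ are pointwise bounded. The bound $|{\rm Rm}_{\bar g_i}| \leq 1$ on $\hat M_i$ comes from (the proof of) Theorem 3.3 applied to the rescaled sequence. Since $H \equiv 0$ on $V_i$, the Gauss equation reads
$$s_{V_i} = 2K^{\hat M_i}(TV_i) - |A|^2,$$
so the defining bound $|\bar s_{V_i}| \leq 1$ from \eqref{Qmax} together with $|{\rm Rm}_{\bar g_i}| \leq 1$ yields $|A|^2 \leq C$ pointwise on $V_i$ in the rescaled metric. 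On the complementary portion $U_i \subset \dm_i$, the rescaled metric becomes nearly flat since $\g|_{U_i}$ is uniformly controlled in $C^{m,\a}$, and in particular the second fundamental form and Gauss curvature of $U_i$ tend to zero. Hence the intrinsic Gauss curvature of $\Si_i$ is uniformly bounded on its smooth strata, and by Günther's comparison inequality for surfaces with an upper sectional-curvature bound, the intrinsic disk $D_{p_i}(1)$ centered at $p_i \in V_i$ has area bounded below by a universal constant $a_0 > 0$.

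For \eqref{v1}, the pointwise bound on $|A_{\Si_i}|$ on the smooth part together with $|{\rm Rm}_{\bar g_i}| \leq 1$ imply that the inward $\bar g_i$-normal exponential map from $\Si_i$ is a diffeomorphism onto a one-sided tubular neighborhood of uniform thickness $\d_0 > 0$, with Jacobian bounded below. Integrating the area estimate of Step 1 through this tubular neighborhood via the coarea formula, or directly in Fermi coordinates, yields $\vol_{\bar g_i}(B_{p_i}(1)) \geq c \cdot a_0 \cdot \d_0 \geq v_0 > 0$, as required.

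The main technical obstacle I foresee is the $C^{1,1}$ seam $\p U_i \cap \p V_i$ of the outer-minimizing hull $\Si_i$, where $\Si_i$ fails to be smooth, the pointwise bound on $A$ can degenerate, and the Fermi chart can break down. I would handle this by distinguishing two cases according to the rescaled distance $d_i = \dist_{\bar g_i}(p_i, \p U_i \cap \p V_i)$. When $d_i \to \infty$, the estimates above apply unchanged on $B_{p_i}(1)$, which is then entirely contained in the smooth stratum $V_i$. When $d_i$ stays bounded, the seam persists in the rescaled limit as a $C^{1,1}$ curve whose geometry is controlled from both sides (the tangent planes of $U_i$ and $V_i$ match at the seam by the $C^{1,1}$-regularity of $\Si_i$), so the Günther area comparison and the tubular construction can be applied on each smooth stratum separately and the contributions summed, producing the same lower bounds up to a universal constant.
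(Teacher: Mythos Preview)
Your argument for \eqref{v2} has a genuine gap. G\"unther's volume comparison gives a lower bound on $\area D_{p_i}(r)$ only for $r$ less than the injectivity radius of $(\Si_i, \bar\g_i)$ at $p_i$; but a lower bound on the injectivity radius is exactly what non-collapse means. A pointwise bound $|K_{\bar\g_i}| \leq C$ controls the conjugate radius, not the cut radius: after blowing up, $(\Si_i,\bar\g_i)$ has diameter $\to\infty$, and nothing you have written prevents it from locally resembling a thin flat cylinder (with short contractible loops whose filling disks live far away in the caps). The paper rules this out by a different mechanism: in the collapsed (thin-cylinder) scenario with short loops of length $\ell$, bounded ambient curvature forces $|A|\sim \ell^{-1}$, so $\int |A|^2 \gtrsim \ell^{-1}$; this contradicts the \emph{scale-invariant} $L^2$ bound $\int_{\Si}|A|^2 \leq 8\pi+1$ established earlier. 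You derive a pointwise bound on $|A|$ from the Gauss equation but never invoke this integral bound, which is the actual obstruction to collapse.

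For \eqref{v1} your tubular-neighborhood argument is different from the paper's and, once \eqref{v2} is correctly in hand, is plausible; but note that bounded $|A|$ and bounded $|{\rm Rm}|$ control only the \emph{focal} radius of the normal exponential map, not its global injectivity, so a further (standard but omitted) argument is needed. The paper bypasses this entirely: it passes to the conformal metric $\w g = u^2 g$, which has ${\rm Ric}_{\w g}\geq 0$ by \eqref{tilde}, and then uses Bishop--Gromov monotonicity of $\vol_{\w g}B(r)/r^3$ together with the asymptotic flatness of $\w g$ (which fixes a positive asymptotic volume ratio) to obtain the scale-invariant lower volume bound directly; uniform two-sided bounds on $\bar u_i$ near $p_i$ then transfer this to $\bar g_i$. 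This route is independent of \eqref{v2} and avoids the seam issue altogether.
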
 

\begin{proof} 
First, as discussed in the Appendix, cf.~\eqref{tilde}, for any $(M, g, u) \in \cE$, the conformally equivalent metric $\w g = u^2 g$ 
has positive Ricci curvature. The metric $\w g$ is AF and hence by the Bishop-Gromov volume comparison theorem, there is a 
uniform scale-invariant lower bound on the volume of $r$-balls in $(M, \w g)$:
$$\frac{{\rm vol}_{\w g}B(r)}{r^3} \geq v_0 > 0.$$
Now apply this to the rescaled metrics $(\hat M_i, \bar g_i, \bar u_i)$ based at $p_i$. Since $\bar u_i$ is uniformly bounded 
above and below in $(B_{p_i}(1), \bar g_i)$, (cf.~\eqref{up2}), the metrics $\bar g_i$ and $\w g_i$ are uniformly quasi-isometric in this region. 
This proves \eqref{v1}. 

    To see that the boundary geometry also cannot collapse, recall that $(\Si_i, \bar \g_i)$ has uniformly bounded (Gauss) 
curvature. If the geometry is collapsing at $p_i$, then the geometry of a large geodesic disc $D_{p_i}(R)$ about $p_i$ 
in $(\Si_i, \bar \g_i)$ is that of a long cylinder which has a foliation by short (geodesic) loops. Since the ambient curvature is 
uniformly bounded, if $\ell$ denotes the length of the short loops in $(\Si_i, \bar \g_i)$ near $p_i$, then $|A| \sim \ell^{-1}$ and hence 
 $$\int_I |A|^2 dv_{\g} \sim \int_I \ell \ell^{-2} \sim \ell^{-1}.$$
 Since $A$ is bounded in $L^2$ by \eqref{Ab} and since \eqref{Ab} is scale-invariant, this gives a uniform lower bound on $\ell$, 
 proving \eqref{v2}. 
 
 \end{proof}
   
  The discussion above proves that, after passing to a subsequence, the sequence $(\hat M_i, \bar g_i, \bar u_i, p_i)$ converges in the 
pointed $C^{1,\a}$ topology, modulo diffeomorphisms, to a complete non-compact limit $(M_{\infty}, \bar g_{\infty}, \bar u_{\infty}, p)$, with 
complete, non-compact boundary $(S, \g_{S}, \bar u_{\infty}, p)$. The rest of the argument to follow is to prove that the limit is flat, 
with flat boundary $S$. By results in the Appendix, cf.~Proposition 5.2, the convergence to the limit is in the strong $C^2$ topology, 
giving a contradiction to the continuity of $Q$ in \eqref{Qmax} under $C^2$ convergence. This will complete the proof of Theorem 3.5. 

  All the discussion to follow takes place on the limit $M_{\infty}$ and $S$ so we drop the bar and $\infty$ below to simplify the notation. 
     
   On the blow-up limit boundary $S$, we have $A$ and $d\nu$ are in $L^2(S, \g_S)$, by \eqref{Ab} and \eqref{nub}. Moreover, the 
ambient curvature $|{\rm Rm}|$ is uniformly bounded along $S$. As above, $D(r)$ denotes a geodesic $r$-disc in $(S, \g_S)$ about $p$. 
  
 \begin{lemma}
 The limit $(S, \g_S)$ has at most quadratic area growth, i.e.
 \be \label{quad}
 {\rm area}_{\g_S} D(r) \leq V r^2,
 \ee
 for some $V < \infty$. 
 \end{lemma}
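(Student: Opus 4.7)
The plan is to use the global $L^2$-bound on $|A|$, $\e$-regularity for stable minimal surfaces, and the resulting finite-topology structure of $S$ at infinity to deduce quadratic area growth. First I would establish that $(S, \g_S)$ is a complete stable minimal surface in the ambient $(M_\infty, \bar g_\infty)$: stability follows from the outer-minimizing property of $\Si_i$ in $(\hat M_i, \bar g_i)$ (second variation of area is non-negative for any normal test function supported in the interior of $V_i$) and passes to the $C^{1,\a}$-limit, while $|{\rm Rm}_{\bar g_\infty}| \leq 1$ by \eqref{Qmax}. The global bound $\int_S |A|^2 \leq 8\pi + 1$ is a scale-invariant estimate from \eqref{Ab} and so passes to the limit on $S$.

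Next, by a standard $\e$-regularity argument for stable minimal surfaces in $3$-manifolds of bounded curvature (Schoen, Choi--Schoen), the finite $L^2$-norm of $|A|$ together with stability forces $|A|(x) \to 0$ as $d_S(p, x) \to \infty$. Outside a compact intrinsic ball $D(r_0) \subset S$, the smallness of $|A|$ means that $S$ is locally a graph over a tangent plane in $(M_\infty, \bar g_\infty)$ with small $C^1$ gradient, using the local graph representation for nearly flat minimal surfaces in bounded-curvature ambient manifolds. This gives $S$ the structure of finitely many annular ends outside $D(r_0)$.

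On each annular end the graph structure implies that the intrinsic area of a disc of radius $r$ grows like $O(r^2)$, comparable to $\pi r^2$ in the tangent plane. Combined with the finite number of ends and the uniformly bounded area on the compact core $D(r_0)$, this yields $\area_{\g_S} D(r) \leq V r^2$ for all $r$, which is \eqref{quad}.

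The main obstacle is the $\e$-regularity step, which requires translating finiteness of $\int_S |A|^2$ into pointwise decay of $|A|$ at infinity; this relies on a scale-invariant $L^2 \to L^\infty$ estimate for $|A|$ on stable minimal surfaces in ambient manifolds of bounded curvature, together with passing these estimates through the blow-up limit. One should also verify that the one-sided nature of $S$ (as boundary of $M_\infty$) does not obstruct the graph decomposition; this is resolved by doubling across $S$ or by first executing the argument on $V_i$ before taking limits.
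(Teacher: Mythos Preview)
Your approach has a genuine gap at the key step: from ``$|A|(x)\to 0$ as $d_S(p,x)\to\infty$'' you jump to ``$S$ has finitely many annular ends outside $D(r_0)$, each with quadratic area growth.'' This implication is not available in an ambient manifold with merely \emph{bounded} curvature. The classical finite-topology and area results for complete minimal surfaces with finite total curvature (Osserman, Jorge--Meeks, Choi--Schoen) are specific to $\bR^3$ and rest on the Weierstrass representation or on the Gauss equation $K_S=\det A$, so that $\int_S|K_S|<\infty$ follows from $\int_S|A|^2<\infty$. Here the Gauss equation reads $K_S=K^{amb}_{sect}(TS)+\det A$, and the first term is only bounded, not integrable; indeed $\int_S|K^{amb}_{sect}|$ could be infinite precisely because you have not yet controlled the area of $S$. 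So the finite-topology step is circular. Likewise, the local graph property is a pointwise statement and does not globally align tangent planes, so it gives no area comparison on its own. Doubling across $S$ does not help either: the double is $C^{1,\a}$ at best unless $A\equiv 0$.

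The paper's argument is completely different and uses the static vacuum structure in an essential way, in particular the second $L^2$-bound $\int_\Si|d^T\nu|^2\leq C$ from \eqref{nub}, which your proposal never invokes. One integrates the constraint $|A|^2+s_{\g_S}=2u^{-1}\D_S u$ over $D(r)$, applies Gauss--Bonnet and the divergence theorem, and obtains (with $v(r)=\mathrm{length}\,\p D(r)$)
\[
v'(r)\leq C+\int_{\p D(r)}|d^T\nu|.
\]
Then H\"older together with $\int_S|d^T\nu|^2\leq C$ converts this into an ODE inequality yielding $v(r)\leq Cr$, hence $\area D(r)\leq Vr^2$ directly. No $\e$-regularity, graph decomposition, or end analysis is needed; the bound on $d^T\nu$ is doing the work that your argument tries to get from the ambient geometry alone.
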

 
 \begin{proof} 
 On $(S, \g_S)$, we have 
\be \label{HamS}
 |A|^2 + s_{\g_S} = 2u^{-1}\D u.
 \ee
 Let $v(r)$ be the length of the boundary $S(r) = \p D(r)$. Integrating \eqref{HamS} over $D(r)$ and applying the Gauss Bonnet 
 and divergence theorems gives 
$$ 4\pi - 2v'(r) = -\int_{D(r)} |A|^2 + 2\int_{D(r)}|d^T\nu|^2  + 2\int_{S(r)} \partial_r \nu.$$
By \eqref{Ab}, this gives 
$$v'(r) \leq C + \int_{S(r)} |d^T\nu | .$$
On intervals where $v'(r) \leq C$, \eqref{quad} holds (by integration), so assume only 
$$v'(r) \leq C \int_{S(r)} |d^T\nu | .$$
By the H\"older inequality, $\int_{S(r)}|d^T\nu| \leq (\int_{S(r)} |d^T\nu|^2)^{1/2}v(r)^{1/2}$, so that 
$((\sqrt{v'(r)})')^2 \leq C \int_{S(r)}|d^T \nu|^2$. Let $q(r) = \sqrt{v'(r)}$, so that via \eqref{nub} we obtain 
$$\int_0^r (q')^2 \leq C.$$
Then $q(r) \leq \int_0^r q'(r) \leq (\int_0^r (q'(r)^2)^{1/2}r^{1/2} \leq Cr^{1/2}$, so that again $v(r) \leq C r$. 
This proves the result.   

 \end{proof}
 
\begin{lemma} 
The surface $(S, \g_S) \subset (M, g)$ is AF in the weak sense that 
$$Q(x_k) \to 0,$$
on any sequence $x_k \to \infty$ in $S$. 
\end{lemma}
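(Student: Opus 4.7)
The plan is to combine the $L^2$-integrability of $A$ and $d^T\nu$ with the quadratic area growth of Lemma 3.8 to force both quantities to vanish identically in a pointed $C^2$ limit of $(M,g,u,x_k)$ produced by Proposition 5.2, and then to invoke the static vacuum identities together with the Codazzi equation to show that $\operatorname{Ric}_g$, and hence both $|{\rm Rm}_g|$ and $s_{\g_S}$, vanish at the limit point. The hard part will be ensuring genuine $C^2$ (rather than merely $C^{1,\a}$) convergence, since $Q$ involves second derivatives of $g$ and $\g_S$; this is precisely what the Appendix regularity result will provide.

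First I would establish that for any $x_k\to\infty$ in $S$,
\be \label{plan-Ld}
\int_{D(x_k,1)} \bigl(|A|^2 + |d^T\nu|^2\bigr)\, dv_{\g_S} \longrightarrow 0.
\ee
The integrands lie in $L^1(S)$ by \eqref{Ab} and \eqref{nub}. The non-collapse bound \eqref{v2}, proved at $p$ via Bishop--Gromov for the optical metric $\w g = u^2 g$, transfers uniformly to every point of $S$ since $u$ is pinched between positive constants globally and so $\w g$ is uniformly quasi-isometric to $g$. Combined with Lemma 3.8, this implies that only boundedly many pairwise disjoint unit $\g_S$-discs fit into any annulus $D(R)\setminus D(R/2)$; extracting a subsequence along which the $D(x_k,1)$ are pairwise disjoint forces the tail of the finite total $L^2$ integral onto \eqref{plan-Ld}.

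Next I would pass to a pointed limit of $(M,g,u,x_k)$. The uniform bound $|{\rm Rm}_g|\le 1$ along $S$ (from $Q\le 1$), the uniform positivity of $u$, and the non-collapse above verify the hypotheses of Proposition 5.2 at the shifted base points. A subsequence therefore converges in the pointed $C^2$ topology, modulo diffeomorphisms, to a complete static vacuum limit $(M',g',u',x')$ with smooth non-compact stable minimal boundary $S'$ through $x'$. Continuity of the $L^2$ integrands under $C^2$ convergence combined with \eqref{plan-Ld} yields $A\equiv 0$ and $d^T\nu\equiv 0$ on the unit $\g_{S'}$-disc $U\subset S'$ about $x'$.

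Finally I would derive the rigidity at $x'$. Since $d^T\nu\equiv 0$ on $U$, $u'$ is locally constant on $U$, so $D^2_{S'}u'\equiv 0$ and $\D_{S'}u'\equiv 0$ on $U$. Because $A\equiv 0$ and $H=0$, the Codazzi equation $(\nabla_X A)(Y,Z) - (\nabla_Y A)(X,Z) = R(X,Y,Z,N)$ forces $R(X,Y,Z,N)=0$ on $U$ for all tangent $X,Y,Z$, and contracting yields $\operatorname{Ric}_{g'}(N,X) = 0$ on $U$. Decomposing the ambient Hessian then gives the tangential component $D^2u'(X,Y) = D^2_{S'}u'(X,Y) - A(X,Y)N(u') = 0$, the normal-normal component $D^2u'(N,N) = -\D_{S'}u' - HN(u') = 0$ by harmonicity of $u'$, and the mixed component $D^2u'(N,X) = u'\operatorname{Ric}_{g'}(N,X) = 0$ via the static vacuum equation. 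Hence $D^2u'(x')=0$, so $\operatorname{Ric}_{g'}(x')=0$; since the Weyl tensor vanishes in dimension $3$ and $s_{g'}=0$, this gives $|{\rm Rm}_{g'}|(x')=0$. The Gauss equation $K_{\g_{S'}} = K_{g'}^{\rm amb}(T_{x'}S') + \det A$ then yields $|s_{\g_{S'}}|(x') \le 2|{\rm Rm}_{g'}|(x') + |A|^2(x') = 0$, so $Q(x')=0$. Continuity of $Q$ under pointed $C^2$ convergence finally delivers $Q(x_k)\to 0$.
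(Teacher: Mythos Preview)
Your argument is correct and reaches the conclusion by a somewhat different route than the paper. The paper, after establishing $A\to 0$ and $d^T\nu\to 0$ in $L^2_{loc}$ on the pointed limit, feeds these into the divergence constraint \eqref{dnu} to deduce $N(u)\to const$, and then invokes unique continuation for $\D_g u = 0$ to conclude that the \emph{entire} limit $(M',g',u')$ is flat with affine potential, so that $Q\equiv 0$ there. Your approach instead works pointwise at $x'$: from $A\equiv 0$ on the disc $U$ you use Codazzi directly to kill $\operatorname{Ric}_{g'}(N,\cdot)|_{TS'}$, then assemble the vanishing of $D^2u'$ at $x'$ component by component and read off $\operatorname{Ric}_{g'}(x')=0$, hence ${\rm Rm}_{g'}(x')=0$ in dimension $3$. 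This is more elementary in that it bypasses unique continuation, and it extracts only what is strictly needed (namely $Q(x')=0$) rather than global flatness of the limit; the paper's route, on the other hand, yields the stronger statement that the whole pointed limit is flat, which is conceptually cleaner.

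One imprecision worth correcting: you assert that $u$ is ``pinched between positive constants globally'' on $S$, but the $L^2$ bound on $d^T\nu$ does not imply this (e.g.\ $\nu\sim\log\log r$ on a surface of quadratic area growth). What is actually available, and sufficient, is \emph{local} pinching of $u$ on unit balls, coming from the Appendix regularity \eqref{up1}--\eqref{up2} under the uniform bound $Q\le 1$; this is all that is needed to transfer the Bishop--Gromov non-collapse from $\w g=u^2g$ to $g$ at each $x_k$. (Two minor slips: the quadratic area growth is Lemma~3.7, and $\w g=u^2g$ is not the optical metric, which is $u^{-2}g$.)
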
 

\begin{proof} 
Let $x_k$ be any divergent sequence in $S$ and consider the geometry of pointed manifolds $(S, \g_S, x_k)$ in regions about $x_k$. 
By Lemma 3.6, this sequence does not collapse, and so has a $C^{1,\a}$ limit (in a subsequence). Also, $A \to 0$ and $d\nu \to 0$ in 
$L_{loc}^2$, by \eqref{Ab} and \eqref{nub}. By the regularity estimates in the Appendix, $A$ is uniformly bounded in $L^{\infty}$, 
as is $u \in L_{loc}^{2,p}$ on the boundary $S$ and in $M$, cf.~\eqref{up1}, \eqref{up2}. Hence $N(u)$ is also uniformly bounded. 
By the constraint equation \eqref{dnu}, 
$$\d A - A(d\nu) = u^{-1}dN(u).$$
The term $A(d\nu) \to 0$ in $L_{loc}^2$, while the term $\d A \to 0$ in $L_{loc}^{-1,2}$. It follows that $dN(u) \to 0$ in $L_{loc}^{-1,2}$ 
and hence $N(u) \to const$ weakly in $L_{loc}^2$. Since $N(u)$ is also uniformly bounded in $L^{\infty}$, $N(u) \to const$ strongly in 
$L_{loc}^2$. It follows that on the limit (which is smooth by regularity results from Appendix), one has the limit data
$$A = 0, \ \ u = const, \ \ N(u) = const.$$
Similarly, $NN(u) = 0$ on the limit, etc. Since $\D_g u = 0$ on $M$, unique continuation for the Laplacian implies that $D^2 u = 0$ 
on $(M, g)$ so that $u$ is an affine function and the metric $g$ is flat. Since $A = 0$, the boundary metric $\g$ is also flat. 

  It follows that $Q \equiv 0$ on the limit. By the strong convergence (Proposition 5.2) in the Appendix, it follows that $Q(x_k) \to 0$, 
as claimed. 
  
 \end{proof}  
  
    It follows, by repeated use of Lemma 3.8 at various blow-down scales that all tangent cones at infinity of $(S, \g_S)$ are flat, with flat 
ambient geometry, i.e.~$(D(r), \frac{1}{r^2} \g_S)$ converges to a flat metric away from the origin $p$ and similarly with $g_S$ in 
place of $\g_S$. Now apply the stability inequality \eqref{2nd2} on the complete $(S, \g_S)$.  By the well-known log-cutoff trick, 
(cf.~\cite{CM} for example) one has 
 $$\int_S |A|^2 - s_{\g_S} \leq 0,$$
 so that by the Gauss-Bonnet theorem,  
\be \label{GB}
\int_S |A|^2 \leq  \int_S s_{\g} = 2\pi - \lim_{r\to \infty} v'(r).
\ee
This implies first that $\lim_{r \to \infty} \frac{v(r)}{r} \leq 2\pi$. Suppose strict inequality holds, $\lim_{r \to \infty} \frac{v(r)}{r} < 2\pi$. The tangent 
cone at infinity $C$ is $S$ is then a flat cone, of cone angle $\a < 2\pi$, which bounds a flat static vacuum solution. By the scale-invariant 
apriori estimates \eqref{apri}, the limit static vacuum solution is smooth away from the boundary cone $C$, so cannot be of the form 
$C\times \bR^+$, which is singular in the interior: any singularity of the cone $C$ cannot propagate into the exterior region. On the 
other hand, if $C$ is a singular cone, $\a < 2\pi$, in an ambient smooth flat geometry, then one has $\int_C |A|^2 = \infty$, again a 
contradiction. (Alternately, such cones are not outer-minimizing in the ambient geometry). 

   It follows that 
\be \label{2pi}
\lim_{r \to \infty} \frac{v(r)}{r} = 2\pi,
\ee  
and hence by \eqref{GB}, 
$$\int_S |A|^2 = 0,$$
so that $A = 0$ on $S$. The potential equation \eqref{HamS} then becomes 
$$K = u^{-1}\D u.$$
Integrating this over $D(r)$ and taking the limit gives 
$$\lim_{r \to \infty} [\int_{D(r)} |d^T\nu|^2 + \int_{S(r)}\p_r \nu ] = 0,$$
since $\lim_{r \to \infty}\int_{D(r)}K = 0$. As in the proof of Lemma 3.7, this implies 
$$\int_{S} |d^T\nu|^2 = 0,$$
so that $u = const$ on $S$. Thus the Cauchy data $(\g, A, u, N(u))$ for the static vacuum Einstein equations equal that of a flat solution 
with affine potential. By unique continuation as in the proof of Lemma 3.8, it follows that the limit is flat, with flat boundary, so that 
$Q = 0$ on $S$. This again contradicts the strong convergence to the limit, cf.~Proposition 5.2. 
  
\end{proof}

  The condition that $K_{\g}$ is a Morse function is a sufficient but we don't believe a necessary condition. It is natural to conjecture 
that Theorem 3.5 holds for all $\g$, with then $H$ sufficiently small depending only on lower and upper bounds for the distance of 
$\g$ to a round metric in say $C^{4,\a}$. 

   It follows in particular that the mass $m_{B}$ cannot be realized by boundary data $(\g, H)$ with $H$ sufficiently small and $\g$ not 
a round metric, i.e.~Conjecture I also fails in this situation. 

\begin{remark}
{\rm  Theorem 3.5 should be contrasted with the following result, proved in \cite{A6}; the proof also strongly uses the outer-minimizing property: 
for any given boundary data $(\g, H) \in \cB$, there is a $\l > 0$ such that 
$$(\g, \l H) \in {\rm Im}\, \Pi_B,$$
i.e.~there is a static vacuum solution $(M, g, u)$ with boundary data $(\g, \l H)$. Moreover, $\dm$ is outer-minimizing in $(M, g)$. 
Thus, for any $\g$, if one increases $H$ sufficiently by a positive multiplicative factor, then there is a solution to the static vacuum equations 
realising the given Bartnik boundary data. 
}
\end{remark}

   For boundary data $(\g, H) \notin {\rm Im}\, \Pi_B$ as in Theorem 3.5, a minimizing sequence for $m_B(\g,H)$ cannot converge, 
so it must degenerate. How? Consider first the boundary data $(\g, 0) \in \w \cB$, with $\g$ not a round metric on $S^2$, compare 
with \eqref{Schdeg}. The black hole uniqueness theorem (Proposition 2.1) implies that a minimizing sequence for this boundary data must 
degenerate. The work of Mantoulidis-Schoen \cite{MS} indicates how this occurs -- when the first eigenvalue  $\l_1(-\D_{\g} + K_{\g}) > 0$ -- for 
minimizing sequences with respect to the outer-minimizing mass $m_B^{out}(\g, 0)$ in \eqref{bmo}. Briefly, the AF end of a minimizing sequence 
converges to a Schwarzschild metric $g_{Sch(m)}$ up to or  arbitrarily near the horizon, where $m$ is given by 
 $$m = \sqrt{\frac{\area_{\g}\dm}{16\pi}}.$$
 The remaining portion of the sequence forms an (arbitrarily) long cylinder of non-negative scalar curvature, connecting the nearly 
 round metric near the horizon at one end to the metric $\g$ at the other end. The cylindrical region is 'hidden behind' the Schwarzschild 
 horizon; since this region does not contribute to the ADM mass, one has no effective control on the behavior of a minimizing sequence 
 in this region. We refer also to the work of Miao-Xie \cite{MX} for related results. 
 
   We conjecture similar behavior occurs for minimizing sequences for $m_B(\g, H)$ when $H > 0$ is sufficiently small, depending on $\g$ 
as in Theorem 3.5. 

\medskip 
  
The non-existence region above strongly suggests there is a large region of fold behavior for the map $\Pi_B$, which, in turn, implies a large 
region of non-uniqueness for Conjecture II (compare with the discussion in and preceding the toy model \eqref{S1}).  

 We describe the situation on a concrete example. Choose a fixed Schwarzschild metric $g_{Sch}(m)$ and choose a round sphere $S(t)$ at fixed 
distance $t > 0$ to the horizon. Let $(\g_0, H_0)$ be the induced Bartnik boundary data on $S(t)$, so $\g_0$ is a round metric and $H_0 > 0$ is a 
small constant. It is proved in \cite{AK} that ${\rm Ker}\, D\Pi_B = 0$ at the Schwarzschild metric exterior to the horizon $H = 0$ (even though 
$D\Pi_B$ is not Fredholm there); hence ${\rm Ker} \, D\Pi_B = 0$ at the exterior region to $S(t)$ for $t$ small, i.e.~this exterior metric 
is a regular point of $\Pi_B$. Now choose any fixed metric $\g$ satisfying the hypotheses of Theorem 3.5, i.e.~$K_{\g}$ is a Morse 
function in the region $K_{\g} > 0$. Consider the curve of boundary data 
$$L(s) = (1-s)(\g_0, H_0) + s(\g, H_0) \subset \cB,$$
for $s \in [0,1]$ and the corresponding inverse image 
 $$\Pi_B^{-1}(L(s)).$$
 By the Smale-Sard theorem \cite{Sm}, if necessary one may perturb $L: I \to \cE$ slightly to $\w L: I \to \cE$, keeping the endpoints fixed, so 
 that $\w L: I \to \cE$ is transverse to $\Pi_B$. The inverse image 
$$\Pi_B^{-1}(\w L),$$
is then a collection of 1-dimensional curves $\{\s_i(s)\}$ in $\cE$. If there is more than one component of $\Pi_B^{-1}(\w L)$ then of course one 
already has non-uniqueness for Conjecture II, so suppose then that $\s = \Pi_B^{-1}(\w L)$ is a connected curve. By Theorem 3.5, for $H_0$ 
sufficiently small depending on $\g$, there is $s_0 < 1$ (possibly close to 1) such that either 
\be \label{eor}
\Pi_B^{-1}(\w L[s_0, 1]) = \emptyset \ \ {\rm or} \ \  \Pi_B^{-1}(\w L(s_0, 1]) = \emptyset .
\ee
Without loss of generality, assume $\Pi_B^{-1}(\w L(s)) \neq \emptyset$ for $s < s_0$. We discuss these two possibilities in more detail. 
  
  I. In the first case above, as $s \to s_0$ with $s < s_0$, the curve $\s(s)$ diverges to infinity in $\cE$, i.e.~$\Pi_B$ is not a proper map over $\w L$. 
Since at $s = 0$ the boundary $S(t)$ is strictly outer-minimizing in the Schwarzschild metric, the boundary $\dm$ of $\s(s)$ remains outer-minimizing for $s$ 
sufficiently small. However, by Theorem 3.3, the boundary cannot remain outer-minimizing for all $s \in [0,s_0)$. The divergence of the curve 
$\s(s)$ as $s \to s_0$ is due either to a loss of the manifold-with-boundary structure or the curvature blows up on approach to the boundary (or both). 
  
  II. In the second case, the map $\Pi_B$ is proper at least over $\w L[0, s_1)$ for some $s_1 > s_0$. In this case, $\Pi_B$ exhibits fold behavior 
(like $x \to x^2$) over $\w L$. Such fold points $\s(s_0)$ are critical points of $\Pi_B$ and the map $\Pi_B$ is locally 2-1 over $\w L$ 
near $\w L(s_0)$. This again gives non-uniqueness for Conjecture II.  
  
    The same analysis as above applies to general curves in $\cB$, which start at data $(\g_0, H_0)$ for which $(\Pi_B)^{-1}(\g_0, H_0) \neq \emptyset$ 
and which end at points $(\g, H)$ satisfying the conditions of Theorem 3.5, for which $(\Pi_B)^{-1}(\g, H) = \emptyset$. We conjecture that, at least 
for a large class of curves, Case I does not occur, so the fold behavior of Case II holds.

\section{Weyl metrics} 

  It is of course important to have a large class of examples on which one can test the Bartnik conjectures in various regions of boundary 
data. The most interesting class of (relatively) explicit solutions are the Weyl solutions \cite{W}, \cite{BW}, which have an additional axial 
symmetry.  These metrics have a hypersurface-orthogonal isometric $S^{1}$ action, so that the metric $g = g_{M}$ on $M$ has the form
\be \label{weyl}
g_{M} = f^{2}d\f^{2} + g_{V},
\ee
where the orbit space $(V, g_{V})$ is a Riemannian surface with $V \simeq (\bR^3 \setminus B)/S^1 \simeq (\bR^2)^+ \setminus D$. Thus the 
Ricci flat 4-metric $(\cM, g_{\cM})$ (as in \eqref{st}) has the form
\be \label{cm}
g_{\cM} = \pm u^{2}dt^{2} + f^{2}d\f^{2} + g_{V},
\ee
with $u$, $f$ positive functions on $V$. This gives the 4-manifold $\cM$ the structure of a toric Einstein 4-manifold (when the $t$-factor is 
compactified to $S^1$ in the Riemannian setting). The static vacuum equations \eqref{stat} are then reduced to equations on $(V, g_V)$. 

  As in \eqref{cE}, let 
\be \label{abweyl}
\cE_{S^1}^{m,\a} = \bE_{S^1}^{m,\a} / {\rm Diff}_1^{m+1,\a}(M),
\ee
denote the (abstract) space of isometry classes of such AF Weyl metrics of the form \eqref{weyl}, with $H > 0$ at $\dm$. Similarly, the 
space of $S^1$-invariant boundary data is given by 
$$\cB_{S^1}^{m,\a} = {\rm Met}_{S^{1}}^{m,\a}(S^{2})\times C_{S^{1},+}^{m-1,\a}(S^{2}).$$
Here ${\rm Met}_{S^{1}}^{m,\a}(S^{2})$ denotes the space of $C^{m,\a}$ metrics on $S^2$ of the form $\a^2 d\t^2 + \b^2 d\f^2$, $\a = \a(\t) > 0, 
\b = \b(\t)$ where $(\t, \f)$ are standard spherical coordinates on $S^2(1)$; there is no $d\f d\t$ cross term in \eqref{weyl}. At the poles 
$\t = 0, \pi$, one has $\b = 0$ and $\b' = \pm \a$ with similar higher order conditions on $\b$ at $0$, $\pi$ for $C^{m,\a}$ smoothness. The functions 
in $C_{S^{1},+}^{m-1,\a}(S^{2})$ are positive functions $H$ of the form $H = H(\t)$ with similar smoothness conditions at the poles.

    The associated Bartnik boundary map 
\be \label{Bart3}
\Pi_B: \cE_{S^1} \to \cB_{S^1},
\ee
$$(M, g, u) \to (\g, H),$$
is still a smooth Fredholm map of Fredholm index 0. One expects or at least hopes that it should be considerably simpler to understand the 
validity of Conjectures I and II in the context of Weyl metrics compared with the general case. 

   An important feature is that Weyl metrics have a canonical choice of coordinates, Weyl cylindrical coordinates and it is useful to derive this 
in some detail, cf.~also, \cite{A2} \cite{Kr} for background material on these metrics. First, note that both $M_{u} = V\times_{f}S^{1}$ with metric 
$g_{V} + f^{2}d\t^{2}$ and $M_{f} = V\times_{u}S^{1}$ with metric $g_{V} + u^{2}d\t^{2}$ are static 
vacuum solutions on the respective $3$-manifolds, with potentials $u$ and $f$. Thus one has a natural 
dual pairing $u \leftrightarrow f$, cf.~\cite{A2} for further discussion. On $M_{u}$,  
$$0 = \D_{M_{u}}u = \D_{V}u + \<d\log f, du\>,$$
while on $M_{f}$, 
$$0 = \D_{M_{f}}f = \D_{V}f + \<d\log u, df\>.$$
It follows that the function  
\be \label{r}
r = fu
\ee
representing the area of the toral fibers in \eqref{cm}, is harmonic on $V$, $\D_{V} r = 0$. Noting that 
$V$ is simply connected, let $z$ be the harmonic conjugate of $r$ on $V$. Then $(r, z)$ give 
isothermal coordinates for the metric $g_{V}$. Adding the coordinate $\f \in [0,2\pi]$ gives 
the Weyl canonical cylindrical coordinates $(r, z, \f)$ on $M = V \times_{f}S^{1}$. In these coordinates, 
the metric $g$ on $M$ has the form 
\be \label{can}
g = g_{M} = u^{-2}[e^{2\l}(dr^{2} + dz^{2}) + r^{2}d\f^{2}],
\ee
for some function $\l$. Let $\nu = \log u$. Further analysis using the static vacuum equations \eqref{stat} shows that $\l$ satisfies 
\be \label{lam}
\l_{r} = r(\nu_{r}^{2} - \nu_{z}^{2}), \ \ \l_{z} = 2r\nu_{r}\nu_{z},
\ee
so that the 1-form $d\l$ is given by $d\l = r(\nu_{r}^{2} - \nu_{z}^{2})dr + 2r\nu_{r}\nu_{z}dz$. Thus, 
$\l$ is completely determined by $u$, up to a constant.  On the $z$-axis $A$ where $r = 0$ 
(so $\t = 0, \pi$) one has $\l = const$. The metric \eqref{can} is regular (i.e.~smooth) at $A$ only 
when $\l = 0$; otherwise there is a cone singularity along $A$. Thus, $\l$ is uniquely determined by $\nu$. 
Note that $r$ in \eqref{r} is uniquely determined by $(M, g, u)$\footnote{Since $u > 0$ on $M$ and $g$ is smooth 
on $M$, note that \eqref{can} also shows that $r$ is a well-defined coordinate, i.e.~$dr \neq 0$ on $M$.} while $z$ is 
uniquely determined up to a constant; this constant may be fixed by imposing the normalization condition 
\be \label{norm}
\int_{\dm} z \, dv_{\g} = 0.
\ee 
We note that $(r, z)$ normalized as in \eqref{norm} vary smoothly with $(g, u)$. 

The coordinates $(r, z, \f)$ represent standard cylindrical coordinates on $\bR^{3}$. Most importantly, the potential 
function $\nu = \log u$ is an axi-symmetric (i.e.~$\f$-invariant) harmonic function with respect to the {\it Euclidean} 
Laplacian:
\be \label{Eucl}
\D_{Eucl}\nu = 0.
\ee
Observe that the full Weyl solution \eqref{can} is determined by the single potential function $\nu$ in these coordinates. 

 \medskip 

   One main point here is that given any (abstract) Weyl metric, there is a canonical choice of 
coordinates, i.e.~gauge, read off from the geometry of $S^{1}$ fibers, compare with Remark 3.2. In this chart, the Weyl 
solution $(M, g, u)$ is completely determined by the axi-symmetric Euclidean harmonic function $\nu$ and the location 
of the boundary $\dm$. This is the remarkable Weyl reduction of a non-linear system of equations to a linear equation. 

\medskip 

  Working in these coordinates, a boundary is specified by a, say $C^{m+1,\a}$, embedded curve 
\be \label{sig}
\s: [0,\pi] \to (\bR^{2})^{+}
\ee
in $(\bR^{2})^{+}$, $\s(\t) = (r(\t), z(\t))$ meeting the axis $A$ smoothly and orthogonally only at $\t = 
0,\pi$. The rotation of $\s$ about the axis $A$ generates an embedding 
$$F: S^{2} \to \bR^{3}$$ 
of a sphere as a surface of revolution in $\bR^3$. The image $\Si = {\rm Im}\, F$ divides $\bR^{3}$ into a compact interior region 
diffeomorphic to a 3-ball $B$ and a non-compact exterior region diffeomorphic to $\bR^3 \setminus B$. Here we focus exclusively 
on the exterior manifold-with-boundary $M$ with $\dm = \Si$. We will assume that $\s$ is oriented\footnote{This will be dropped 
later.} in that $\t = 0$ corresponds to the north pole of $S^2$ while $\t = \pi$ corresponds to the south pole and the normal vector 
$\p_z$ at $\s(0)$ points into the exterior region $M$. 

The induced metric $\g$ on the boundary is given by
\be \label{gamma}
\g = F^*g = u^{-2} [e^{2\l}((r'(\t))^2 + (z'(\t))^2)d\t^2 + r^2 d\f^2 ].
\ee
By computation, one has 
\be \label{Hform}
e^{\l - \nu}H = H_{Eucl} + N_E(\l - 2\nu), 
\ee
where $N_E$ is the Euclidean unit normal pointing into $M$ and $H_{Eucl}$ is the mean curvature of $\dm$ 
with respect to the Euclidean metric; this is given by  
$$H_{Eucl} = \frac{N_E(r)}{r} + \k_E,$$
corresponding to the standard formula for the mean curvature of surfaces of revolution in $\bR^3$; here $\k_E$ is the Euclidean 
geodesic curvature of $\s$, given as $\<\nabla_T N, T\>$ in the Euclidean metric. 

\medskip 

{\bf Examples}.   A very useful and large class of examples of Weyl solutions, closely related to Newtonian gravity, are given 
as follows. Let $d\mu$ be any positive Radon measure, compactly supported on the axis $A$. Then the Newtonian potential 
of $\nu$, i.e.
\be \label{New}
\nu(x) = -\int_{A}\frac{1}{|x-y|} d\mu_y,
\ee
gives an axisymmetric harmonic function on $\bR^3 \setminus {\rm supp} \, d\mu$ and so generates an AF Weyl solution. For example, 
taking 
$$d\mu = \frac{1}{2}d\ell_{[-m,m]},$$
where $d\ell$ is the Lebesque measure on the interval $[-m,m]$ gives the Schwarzschild metric of mass $m$. On the other hand, 
taking 
$$d\mu = \k d\ell_{[-m,m]},$$
for any $\k \neq \frac{1}{2}$ generates a metric $g$ which is singular, i.e.~not smooth up to the boundary ${\rm supp}\, d\mu$. 

  Taking $d\mu = m\d_0$ to be a multiple of the Dirac delta function supported at the origin of $A$, gives 
$$\nu = -\frac{m}{R},$$
a multiple of the Green's function on $\bR^3$, with $R^2 = r^2 + z^2$. The resulting solution, called the Curzon solution, is given explicitly as 
$$g_C(m) = e^{2m/R}[e^{-m^2r^2/R^4}(dr^2 + dz^2) + r^2 d\f^2].$$
This metric becomes highly singular on approach to the origin $0 \in \bR^3$. 

  It is not difficult to see (cf.~\cite{A2}) that for all solutions as in \eqref{New}, $\nu \to -\infty$, so $u \to 0$, on approach to at least a 
dense subset of ${\rm supp} \, d\mu$. Thus, such static vacuum solutions are maximal, in the sense that they cannot be extended 
to any larger domain.\footnote{One could also take potentials $\nu$ as in \eqref{New} with more general signed measures or even 
distributions supported in $A$, generating Weyl metrics in the same way. However, when $d\mu$ is a positive measure, boundaries 
have fill-ins $(\O, g_{\O})$ with non-negative scalar curvature and the ADM mass $m_g$ of the solution is always non-negative.}  

   One may then take a boundary $\dm = {\rm Im}\, F$ as above to obtain a Weyl solution $(M, g, u)$ with induced Bartnik boundary data 
$(\g, H)$ on $\dm$. These solutions correspond to varying domains in a fixed ambient maximal Weyl solution generated by $\nu$ in \eqref{New}. 

\medskip 

  Not all Weyl solutions $(M, g, u)$ are of this form however. The most general solutions are given by solving the Dirichlet problem 
for $\nu$.\footnote{One could also solve the Neumann or a suitable Robin boundary value problem instead.} 
  
\begin{proposition}
Let $\s: I \to \bR^2$ be a $C^{m+1,\a}$ embedding as in \eqref{sig} and let $\nu$ be a $C^{m,\a}$ function on ${\rm Im}\, \s$. Then $\nu$ extends 
uniquely to an axisymmetric harmonic function on the exterior domain $M = \bR^3 \setminus B$ with $\nu \to 0$ at infinity, and generates an 
AF Weyl metric $(M, g, u)$ which is $C^{m,\a}$ up to $\dm$. 

  Conversely, up to isometry in ${\rm Diff}_1^{m+1,\a}(M)$, any AF Weyl metric $(M, g, u)$ which is $C^{m,\a}$ up to $\dm$ is uniquely 
given by such a pair $(\s, \nu)$ satisfying \eqref{norm}. 

\end{proposition}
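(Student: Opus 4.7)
The proof has two directions. For the forward direction, I work in Euclidean cylindrical coordinates $(r, z, \f)$. The surface of revolution $\Si = F(S^{2})$ bounds an exterior domain $M \subset \bR^{3}$, and I first solve the exterior Dirichlet problem $\D_{Eucl}\nu = 0$ on $M$ with the prescribed $C^{m,\a}$ boundary values of $\nu$ on $\Si$ and $\nu \to 0$ at infinity. Standard exterior elliptic theory in AF-weighted Schauder spaces yields a unique such solution; axisymmetry of the data forces axisymmetry of the solution by uniqueness applied to the rotational pullback. Setting $u = e^{\nu}$, the 1-form appearing in \eqref{lam} is closed because $\nu$ is Euclidean-harmonic, so integrating with the normalization $\l \to 0$ at infinity defines $\l$ on $V$. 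The candidate Weyl metric is then given by \eqref{can}.

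To complete the forward direction, the key point is that $\l \equiv 0$ on $A \cap M$, ruling out conical singularities along the axis. The axis $A \cap M$ has two connected components, one from $\s(0)$ to $+\infty$ and one from $\s(\pi)$ to $-\infty$. On the axis, $\nu_{r} = 0$ by the axisymmetric Taylor expansion of $\nu$, so the 1-form $d\l$ vanishes tangent to $A$; thus $\l$ is constant on each component, and since both extend to infinity where $\l \to 0$, we conclude $\l \equiv 0$ on $A \cap M$. The static vacuum equations \eqref{stat} then hold by construction, and $C^{m,\a}$ regularity of $(g, u)$ up to $\dm$ transfers from that of $\nu$ via \eqref{lam} and \eqref{can}.

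For the converse direction, given $(M, g, u) \in \cE_{S^{1}}^{m,\a}$ with embedded $\dm$, the excerpt already shows $r := fu$ is harmonic on the simply connected orbit space $(V, g_{V})$. Its harmonic conjugate $z$ exists and is unique up to an additive constant which is fixed by the normalization \eqref{norm}. Then $(r, z)$ form isothermal coordinates on $V \setminus A$ in which the metric takes the canonical form \eqref{can}, and $\dm / S^{1}$ is realized as a $C^{m+1,\a}$ embedded curve $\s$ in $(\bR^{2})^{+}$ meeting the axis orthogonally at the two endpoints corresponding to the poles. Setting $\nu = \log u|_{\s}$ recovers the boundary potential. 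Uniqueness modulo ${\rm Diff}_{1}^{m+1,\a}(M)$ then follows: any two Weyl solutions with the same $(\s, \nu)$ produce the same axisymmetric harmonic extension of $\nu$, hence the same $\l$ from \eqref{lam}, and hence identical metrics in the canonical chart.

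The principal technical obstacle is regularity at the poles $\s(0)$ and $\s(\pi)$ where $\dm$ meets the symmetry axis. In the forward direction, one must confirm that the axisymmetric smoothness of $\nu$, together with its vanishing odd $r$-derivatives on the axis, propagates cleanly to the boundary corner where $\s$ meets $A$ orthogonally; this follows from the full $3$D Euclidean formulation of the Dirichlet problem together with the axisymmetric compatibility conditions built into $\cB_{S^{1}}^{m,\a}$ (the conditions $\b = 0$, $\b' = \pm \a$ at the poles). In the converse direction, one must verify that $r = fu$ vanishes to exactly first order on $A \cap V$ with boundary-regular normal derivative, so that $(r, z)$ extends as a $C^{m+1,\a}$ coordinate chart across the poles; this is a direct consequence of the smoothness of the axisymmetric metric $g$ and the positivity of $u$.
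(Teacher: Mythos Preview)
Your forward direction is more detailed than the paper's (which simply says ``standard from elliptic regularity theory'') and is essentially correct; the argument that $\l \equiv 0$ on each axis component via $\nu_r = 0$ there is a nice explicit verification of smoothness across $A$.

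The converse direction, however, has a genuine gap: you assert that $\dm/S^1$ is realized as a $C^{m+1,\a}$ embedded curve $\s$, but you do not justify the gain of one derivative. The metric $g$ is only $C^{m,\a}$, so $r = fu$ and its harmonic conjugate $z$ are a priori only $C^{m,\a}$ functions on $M$; restricting to $\dm$ gives $\s(\t) = (r(\t), z(\t))$ with no obvious reason to be better than $C^{m,\a}$. Your final paragraph attributes the $C^{m+1,\a}$ chart to ``the smoothness of the axisymmetric metric $g$'', but $g$ is not smooth here, and harmonicity of $r$ on $(V, g_V)$ does not by itself give boundary regularity one order above the metric.

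The paper's argument for this step is specific and uses the mean curvature formula \eqref{Hform}. Since $\nu$ (hence $\l$) is $C^{m,\a}$ up to $\dm$ and $H \in C^{m-1,\a}$, boundary regularity for the harmonic function $\nu$ gives $N(\nu) \in C^{m-1,\a}$, hence $N(\l) \in C^{m-1,\a}$. Then \eqref{Hform} forces $H_{Eucl} \in C^{m-1,\a}(\t)$. Since $H_{Eucl}$ is essentially the geodesic curvature of $\s$ (a second-order quantity) and the Euclidean arclength $\sqrt{(r')^2 + (z')^2} \in C^{m,\a}(\t)$ comes directly from $\g$, an ODE bootstrap yields $\s \in C^{m+1,\a}(\t)$. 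This is the missing idea: the extra derivative for $\s$ comes from the \emph{mean curvature} part of the Bartnik data, not from the metric alone.
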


\begin{proof} 
The first statement is standard from elliptic regularity theory, given the discussion above. For the second statement, given a specific representative 
$(M, g, u) \in \bE_{S^1}^{m,\a}$ representing a class in $\cE_{S^1}^{m,\a}$, construct the Weyl canonical coordinates $(r, z, \f)$, 
normalized as in \eqref{norm}. These coordinates restrict to $\dm$ to give an axi-symmetric embedding $F: \dm \simeq S^2 \to \bR^3$ 
and hence the embedding $\s: I \to (\bR^2)^+$. A different representative $(M', g', u')$ of $(M, g, u)$ in $\cE_{S^1}^{m,\a}$ gives the 
same map $F$, since the diffeomorphisms in ${\rm Diff}_1^{m+1,\a}(M)$ fix the boundary pointwise. 

  To see that $\s \in C^{m+1,\a}(\t)$, by hypothesis $\nu$, and hence $\l$, is $C^{m,\a}$ up to $\dm$, while $H \in C^{m-1,\a}(\t)$. 
Boundary regularity for harmonic functions implies that $N(\nu)$ is $C^{m-1,\a}$ up to $\dm$ and hence the same for $N(\l)$. Thus, 
by \eqref{Hform}, $H_{Eucl} \in C^{m-1,\a}(\t)$. This together with the fact that the Euclidean arclength parameter $\sqrt{(r')^2 + (z')^2} 
\in C^{m,\a}$ implies $\s \in C^{m+1,\a}(\t)$. 

\end{proof} 

\begin{remark}
{\rm Generic boundary data $\nu$ on $\dm$ lead to harmonic functions on $M$ which do not extend to any larger region $M' \supset M$ 
containing $\dm$ in the interior. Thus the examples above, although simple and useful, are not generic. Note that when 
the boundary data $\dm$ and $\nu$ are analytic, then $\nu$ and so $g$ does extend past $\dm$ to a slightly larger domain $M'$. 
}
\end{remark}

\begin{remark} 
{\rm Proposition 4.1 suggests that the simplest boundary data for a Weyl metric are the Dirichlet boundary data. This 
corresponds to the isometric embedding of a prescribed metric $u^2 dt^2 + \g$ on the boundary $S^1\times S^2$ into an ambient 
4-dimensional Weyl metric. However, it is proved in \cite{A3} that Dirichlet boundary data are not elliptic boundary 
data for (general) Einstein metrics. 
}
\end{remark} 

  We see then that the space $\bE_W^{m,\a}$ of $C^{m,\a}$ AF Weyl metrics in Weyl canonical coordinates has a very explicit 
(and simple) form:
$$\bE_W^{m,\a} \simeq [{\rm Emb}^{m+1,\a}(I) \times C^{m,\a}(I)] / T,$$
$$(M, g, u) \leftrightarrow (\s, \nu).$$
where $T$ is the action of translations along the $z$-axis $A$ on the first factor. The Bartnik boundary map is thus given by 
\be \label{pi2}
\Pi_{B}: \bE_W^{m,\a} \to \cB_{S^1}^{m,\a},
\ee
$$\Pi_{B}(\nu, \s) = (F^{*}g, H_{F, g}).$$ 
The three (arbitrary) functions $(\s, \nu) = (r(\t), z(\t), \nu(\t))$ describing the points in $\bE_W$ 
correspond to the three (arbitrary) functions $(\a(\t), \b(\t), H(\t))$ in $\cB_{S^1}$. 

Now, in contrast to the (abstract) map $\Pi_B$ in \eqref{Bart3}, the map in \eqref{pi2} is no longer smooth; although it is $C^0$, it is not 
even $C^1$. Namely, let $F_s$ be a smooth curve of $C^{m+1,\a}$ axi-symmetric embeddings of $S^2$ into $\bR^3$, for example 
$F_s = F + sX$ where $X$ is a $C^{m+1,\a}$ axi-symmetric vector field on $M$. Then the derivative $D\Pi_B(X)$ in \eqref{pi2} involves 
the terms $X(\l - \nu)$ and $X(\nu)$ (cf.~\eqref{gamma}) which are only $C^{m-1,\a}$ and not $C^{m,\a}$ on $M$ up to $\dm$.\footnote{This 
loss of derivative for the action of diffeomorphisms on the space of metrics is well-known. Note that $\Pi_B$ is smooth when $\nu$ is, 
say, $C^{\infty}$ up to $\dm$, so that $\Pi_B$ is smooth in the context of Frechet manifolds. If $(M, g, u)$ extends to a slightly larger 
domain $M'$, as in Remark 4.2, then $\nu$ is automatically as smooth as $\dm$ in $M'$, since static vacuum Einstein pairs $(g, u)$ 
are analytic in the interior.}  

Thus, the natural identification provided by Proposition 4.1 
$$\Phi: \cE_{S^1}^{m,\a} \simeq \bE_{W}^{m,\a},$$
is a homeomorphism, but is not a $C^1$ identification. In other words, for a $C^{\infty}$ curve of $C^{m+1,\a}$ diffeomorphisms 
$\f_s: M \to M$ with $\f_s \neq {\rm Id}$ on $\dm$, the curve $(\f_s^*g, \f_s^*u)$ is not in general a smooth curve in the abstract space 
$\cE_{S^1}$ or $\cE$. In the abstract setting, the ``location" of the boundary $\dm$ is not specified and the Bartnik boundary map 
should be viewed as a free boundary value problem. Nevertheless, the lack of smoothness of the identification $\Phi$ is not an 
essential issue for most purposes. 
 
\medskip 

   Despite the concrete setting of the Bartnik boundary map in \eqref{pi2}, it does not appear to be significantly easier to understand 
the validity of Conjectures I and II for Weyl metrics. For instance, we are not aware of any substantially simpler proofs of 
Proposition 2.1 or Theorem 3.5 in this setting. 

\medskip 

  To make some progress, let us consider a much simpler setting. Namely, fix the (global) potential $\nu$ as in \eqref{New}. Then 
$\nu$, $\l$ are globally defined, independent of the location of the boundary. Let 
\be \label{Enu}
\bE_{\nu}^{m,\a} \subset \bE_{W}^{m,\a}
\ee
be the subspace with potential fixed in this way. Clearly, $\bE_{\nu}$ is a Banach submanifold. For the restriction of the Bartnik boundary 
map to $\bE_{\nu}$, it is natural to drop the map to the mean curvature (freezing a scalar field in the domain corresponds to freezing a 
scalar field in the target).\footnote{Another natural choice would be to keep the mean curvature but take only the conformal class of the 
metric; cf.~\cite{A4} for further discussion.} This leads to consideration of the Dirichlet boundary data map 
\be \label{Dir}
\Pi_D: \bE_{\nu}^{m,\a} \to \hat {\rm Met}^{m,\a}_{S^1}(S^2),
\ee
$$\s \to F^{*}g.$$
Here $\hat {\rm Met}^{m,\a}_{S^1}(S^2)$ is the modification of ${\rm Met}^{m,\a}_{S^1}(S^2)$ consisting of $\a$, $\b$ as before with 
$\a \in C^{m,\a}(\t)$ but $\b \in C^{m+1,\a}(\t)$. This modification is due to the different levels of differentiability of $\a$ and $\b$ 
in \eqref{gamma}.\footnote{The regularizing property of the mean curvature is no longer present in this situation.} Also $g = g(\nu)$. 
Clearly \eqref{Dir} corresponds to the isometric embedding problem: given an axially symmetric metric on $S^2$, does there 
exist a (unique) isometric embedding into a Weyl metric $(M, g, u)$ with fixed $u$?  
  
    Consider first the case $u = 1$ so $\nu = 0$ and $g = g_{Eucl}$. Now the map $\Pi_D$ is not onto, i.e.~not every axi-symmetric metric on 
$S^1$ is realized by an isometric embedding or immersion as a surface of revolution in $\bR^3$. The following is a well-known 
necessary condition. 

\begin{lemma}
Let $\s(\t) = (r(\t), z(\t))$ be a $C^{m+1,\a}$ immersion into $(\bR^2)^+$ generating the surface of revolution $F$ with induced metric 
$\a^2 d\t^2 + \b^2 d\f^2$. Then for all $\t$,
\be \label{ab}
\a(\t) \geq |\b'(\t)|.
\ee
Further equality holds at the endpoints $\t = 0, \pi$, i.e.~at the poles of the surface. 
\end{lemma}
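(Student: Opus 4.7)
The plan is very direct: simply write down the induced metric on a surface of revolution in Euclidean $\bR^3$ and compare the two summands.

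First, I would parametrize the surface $F$ in cylindrical coordinates $(r,z,\varphi)$ as
$$F(\theta,\varphi) = (r(\theta)\cos\varphi,\ r(\theta)\sin\varphi,\ z(\theta)),$$
and compute the pullback of the Euclidean metric $dr^2+dz^2+r^2d\varphi^2$. This gives
$$F^*g_{Eucl} = \big((r'(\theta))^2 + (z'(\theta))^2\big)\,d\theta^2 + r(\theta)^2\,d\varphi^2.$$
Matching with the stated form $\alpha^2\,d\theta^2+\beta^2\,d\varphi^2$ gives $\alpha^2 = (r')^2+(z')^2$ and $\beta = r$, so that $\beta'(\theta) = r'(\theta)$. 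Consequently
$$\alpha(\theta)^2 - \beta'(\theta)^2 \;=\; (r'(\theta))^2 + (z'(\theta))^2 - (r'(\theta))^2 \;=\; (z'(\theta))^2 \;\geq 0,$$
which proves the inequality $\alpha(\theta)\geq |\beta'(\theta)|$ pointwise.

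For the equality statement at the poles $\theta=0,\pi$, I would invoke the smoothness condition at the endpoints described around \eqref{sig}: the embedded curve $\sigma$ meets the axis $A=\{r=0\}$ smoothly and orthogonally at $\theta=0,\pi$, which is precisely the condition ensuring that the surface of revolution is smooth (not conical) at the poles. Orthogonality of $\sigma$ to $A$ means the tangent vector $\sigma'(0) = (r'(0), z'(0))$ is perpendicular to the axis direction $\partial_z$, hence $z'(0) = 0$, and similarly $z'(\pi) = 0$. Substituting into the identity above immediately yields $\alpha(0)^2 = \beta'(0)^2$ and $\alpha(\pi)^2 = \beta'(\pi)^2$.

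There is no real obstacle here — the result is a direct algebraic consequence of the formula for the first fundamental form of a surface of revolution, combined with the smoothness condition at the axis (which is already built into the framework of \eqref{sig}). The only thing to be mildly careful about is the sign conventions (using $|\beta'|$ rather than $\beta'$ handles the fact that $r(\theta)$ may be increasing near one pole and decreasing near the other).
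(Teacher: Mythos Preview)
Your proof is correct and follows essentially the same approach as the paper: both derive $\alpha^2=(r')^2+(z')^2$, $\beta=r$, hence $(z')^2=\alpha^2-(\beta')^2\ge 0$, and then use the smoothness/orthogonality condition at the poles to conclude $z'=0$ there. Your version is slightly more explicit about the parametrization and the geometric reason for $z'(0)=z'(\pi)=0$, but the argument is the same.
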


\begin{proof} For such an immersion, one clearly has 
\be \label{alpha}
\a^2 = (r')^2 + (z')^2
\ee
\be \label{beta}
\b = r,
\ee
 so that 
\be \label{z2}
(z')^2 = \a^2 - (\b')^2 \geq 0.
\ee
Since necessarily $\a > 0$, this proves \eqref{ab}. Further, as noted above, smoothness at the poles 
implies $z' = 0$, proving the second statement. 

\end{proof}
 
   Although it is obvious, it is worth pointing out explicitly that \eqref{beta} shows that the function $r(\t)$ of $\s(\t) = (r(\t), z(\t))$ is uniquely 
determined by the metric $\g$. Thus we need only consider the behavior of $z$ in terms of the boundary data $(\a, \b)$. 
 
   Let $\cV_0 \subset \hat{\rm Met}_{S^1}^{m,\a}(S^2)$ be the open set such that 
\be \label{ab2}
\a(\t) > |\b'(\t)|, \ \ \forall \t \in (0,\pi),
\ee
with $\b'(\t) = \a(\t)$ for $\f = 0,\pi$. Let $\cU_0 = \Pi_D^{-1}(\cV_0) \subset \bE_{\nu=0}^{m,\a}$. 

\begin{lemma} 
The map 
$$\Pi_D: \cU_0 \to \cV_0$$
is a diffeomorphism. 
\end{lemma}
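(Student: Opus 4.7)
The plan is to construct an explicit inverse $\Psi: \cV_0 \to \cU_0$ to $\Pi_D$ and then verify smoothness of both maps. Given $(\a, \b) \in \cV_0$, the relations \eqref{alpha}--\eqref{beta}, or equivalently \eqref{beta} together with \eqref{z2}, force any preimage $\s = (r, z) \in \cU_0$ to satisfy
$$r(\t) = \b(\t), \qquad (z'(\t))^2 = \a(\t)^2 - \b'(\t)^2.$$
By \eqref{ab2}, the right side is strictly positive on $(0,\pi)$, so $z'$ is nowhere zero there and hence has locally, and therefore globally, constant sign; the orientation convention (that $\p_z$ at $\s(0)$ points into $M$) selects this sign uniquely. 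The additive constant is pinned down by the normalization \eqref{norm}. So I would define
\be \label{Psidef}
\Psi(\a, \b) = (r, z), \qquad r = \b, \qquad z(\t) = c(\a, \b) - \int_0^\t \sqrt{\a(s)^2 - \b'(s)^2}\, ds,
\ee
with the sign of the square root dictated by the orientation and $c(\a, \b)$ chosen so that \eqref{norm} holds.

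Next I would check that $\Psi$ lands in $\cU_0$ and really is the two-sided inverse of $\Pi_D$. The pole conditions $\a = |\b'|$ at $\t = 0, \pi$ built into $\hat {\rm Met}^{m,\a}_{S^1}(S^2)$ make the integrand vanish at the endpoints, giving $z'(0) = z'(\pi) = 0$ while $r'(0), r'(\pi) \neq 0$; thus $\s$ meets the axis $A$ orthogonally at the poles, as required for smoothness of the surface of revolution. Strict inequality \eqref{ab2} in the interior makes $\s$ an immersion, and since $z$ is strictly monotonic on $[0,\pi]$ while $r = \b > 0$ on $(0,\pi)$, $\s$ is in fact an embedding. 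Injectivity of $\Pi_D$ on $\cU_0$ is automatic from the same algebra, since the sign of $z'$ and the integration constant are both determined by the data.

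Smoothness of $\Pi_D$ itself is routine from \eqref{alpha}--\eqref{beta}: $\a = \sqrt{(r')^2 + (z')^2}$ and $\b = r$ are smooth functions of $\s \in C^{m+1,\a}$ because $\a > 0$. The main technical point is smoothness of $\Psi$, which reduces to smoothness of the operator $(\a, \b) \mapsto \sqrt{\a^2 - (\b')^2}$ as a map into $C^{m,\a}([0,\pi])$. Away from the poles this is a composition of smooth operations on the open cone $\{u > 0\} \subset C^{m,\a}$ via $u \mapsto \sqrt{u}$. Near the poles the function vanishes, and the higher-order smoothness conditions on $(\a, \b)$ encoded in $\hat {\rm Met}^{m,\a}_{S^1}(S^2)$ (coming from smoothness of an axi-symmetric metric on $S^2$) should force the factorization $\a^2 - (\b')^2 = \sin^2(\t)\, Q(\t)$ with $Q$ strictly positive at $\t = 0, \pi$ and of the appropriate H\"older class. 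Consequently $\sqrt{\a^2 - (\b')^2} = |\sin\t|\,\sqrt{Q}$ is $C^{m,\a}$, and one derivative is gained upon integration, placing $z$ in $C^{m+1,\a}$, which is precisely the regularity required to land back in $\bE_{\nu=0}^{m,\a}$.

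The hard part is making this pole factorization rigorous and verifying that both $(\a, \b) \mapsto Q$ and $Q \mapsto \sqrt{Q}$ depend smoothly on the data in the appropriate H\"older Banach spaces --- this is where the axi-symmetric regularity at the poles (and the matching difference in differentiability of $\a$ and $\b$ used in the definition of $\hat {\rm Met}^{m,\a}_{S^1}$) really has to be exploited. Once that regularity step is in hand, the identities $\Pi_D \circ \Psi = {\rm Id}$ and $\Psi \circ \Pi_D = {\rm Id}$ follow directly from the construction, so $\Pi_D$ is a smooth bijection with smooth inverse, i.e.~a diffeomorphism.
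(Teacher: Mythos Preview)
Your approach to bijectivity is essentially the same as the paper's: both construct the inverse explicitly via $r=\b$ and $z'=\pm\sqrt{\a^2-(\b')^2}$, fix the sign by orientation, and fix the constant by \eqref{norm}. The paper also notes, as you do, that $z$ is strictly monotone so $\s$ is a graph over the $z$-axis and hence embedded.

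Where you diverge from the paper is in the diffeomorphism step. The paper does not attempt to verify smoothness of the inverse directly. Instead it simply computes the derivative
\[
D\Pi_D(\rho,\zeta)=\Bigl(\tfrac{1}{\a}(r'\rho+z'\zeta),\ \rho\Bigr),
\]
observes that $z'\neq 0$ on $(0,\pi)$ forces ${\rm Ker}\,D\Pi_D=0$, checks surjectivity of $D\Pi_D$ by inspection, and then appeals to the inverse function theorem. This sidesteps entirely the pole analysis you flag as the ``hard part'': you never need to factor $\a^2-(\b')^2=\sin^2\t\cdot Q$ or worry about the smoothness of the square root as a nonlinear operator on H\"older spaces near its zero set. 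Your route is not wrong, but it is heavier; since $\Pi_D$ is already known to be smooth, showing $D\Pi_D$ is a linear isomorphism at each point is the more economical way to get smoothness of $\Pi_D^{-1}$.
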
 

\begin{proof} By \eqref{z2}, one has 
\be \label{zp}
z' = \pm \sqrt{\a^2 - (\b')^2}.
\ee
Since by hypothesis, $z' \neq 0$ on $(0,\pi)$, there is a unique choice of sign for $z'$. As noted following \eqref{sig}, the choice of 
orientation of $\s$ then gives 
$$z' = -\sqrt{\a^2 - (\b')^2},$$
on $[0,\pi]$. By integration, this then uniquely defines $z$ under the normalization condition \eqref{norm}. As noted above, the 
function $r(\t)$ is already defined by $\b$. Clearly, the curve $\s$ is a graph over the $z$-axis (except at the poles), so that $\s$ 
is an embedding. 

  The remarks above prove that $\Pi_D$ is a bijection on $\cU_0$. It is also smooth, with derivative given by 
\be \label{dpid}
D\Pi_D(\rho, \z) = (\frac{1}{\a}(r' \rho + z' \z), \rho).
\ee
Since $z' \neq 0$, ${\rm Ker}D\Pi_D = 0$. Similarly, it is easy to see that $D\Pi_D$ is surjective, so that $\Pi_D$ is a diffeomorphism.  

\end{proof} 

  However $D_{\s}\Pi_D$ is no longer a Fredholm map at $\s$ if 
\be \label{zprime}
(\a - |\b'|)(\t_0) = 0,
\ee
i.e.~$z'(\t_0) = 0$, for some $\t_0 \in (0,\pi)$. Namely, to solve $D\Pi_D(\rho, \z) = (q, \rho)$ as in \eqref{dpid} requires $r'\rho + z'\z = \a q$, so that 
$$\z = \frac{\a q-r'\rho}{z'},$$
where $\a, \rho, r'$ and $z'$ are given.  This is solvable only if $\a q-r'\rho$ vanishes on the zero-set of $z'$, and the quotient $\z$ is then 
only $C^{m-1,\a}$. Thus $D\Pi_D$ maps at most only onto a dense subset of a codimension one subspace of $C^{m,\a}$ so that $\Pi_D$ 
is not Fredholm.\footnote{To avoid this loss of derivative, one could pass to $C^{\infty}$ maps and so Frechet spaces. While $D_{\s}\Pi_D$ 
would be Fredholm if $\s$ has isolated points where \eqref{zprime} holds to finite order, $\Pi_D$ would not be a tame 
Fredholm map, due to the factor $(z')^{-1}$.} 

   Next let $\cV_1 \subset \hat{\rm Met}_{S^1}(S^2)$ be the subset such that 
\be \label{chi}
\chi(t) = \a(\t) - |\b'(t)| \geq 0
\ee
with $\chi = 0$ at only finitely many points $\t_i \in [0,\pi]$, each of which is a non-degenerate minimum of $\chi$. As noted above, 
points where $\chi = 0$ correspond to points where $z' = 0$, i.e.~the tangent line to $\s$ is horizontal. As simple examples show, 
one can no longer expect that curves having points where $z' = 0$ remain embedded in general.\footnote{Compare with the discussion 
at the beginning of \S 3.} For this and related reasons, we modify (without changing the notation) the definition of $\bE_{\nu=0}$ to 
$$\bE_{\nu=0}^m = {\rm Imm_0}^{m+1}(I) / {\rm Isom}(\bR).$$
Thus, we have passed from oriented embeddings to unoriented immersions isotopic to embeddings relative to the endpoints, changed 
$(m, \a)$ to $m$ (since there are no longer any elliptic regularity issues) and taken the quotient by action of the rotation invariant 
subgroup ${\rm Isom}(\bR) \subset {\rm Isom}(\bR^3)$, consisting of translations along the $z$-axis $A$ and reflections in 
planes $z = const$. The group ${\rm Isom}(\bR)$ is the relevant group of Euclidean congruences. 

  As before, we have the Dirichlet (boundary) map 
$$\Pi_D: \bE_{\nu=0}^m \to \hat{\rm Met}_{S^1}^m(S^2), \ \ \s \to F^*(g_{Eucl}).$$
 As above, set $\cU_1 = (\Pi_D)^{-1}(\cV_1) \subset \bE_{\nu=0}^m$. It is worth noting that neither $\cV_1$ nor 
 $\cU_1$ is connected; each space has many components. For instance, passing from points in $\cU_0$ to points in $\cU_1$ 
 requires passing through curves where $\chi(t)$ has degenerate minima. 
 
 \begin{lemma} The smooth (but not Fredholm and so not proper) map 
 $$\Pi_D: \cU_1 \to \cV_1,$$
 is a bijection. Further ${\rm Ker}D\Pi_D = 0$ everywhere. 
 \end{lemma}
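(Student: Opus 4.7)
The plan is to mimic the construction in Lemma 4.4, recovering $\sigma = (r,z)$ from $(\alpha,\beta)$ via $r = \beta$ and
$$(z')^2 = \alpha^2 - (\beta')^2 = \chi\cdot(\alpha + |\beta'|),$$
while carefully handling the finite set $\{\tau_i\}$ of non-degenerate zeros of $\chi$. First I would Taylor expand near each $\tau_i$: the non-degeneracy $\chi''(\tau_i) > 0$ gives
$$\alpha^2 - (\beta')^2 = (\tau - \tau_i)^2\, h_i(\tau), \qquad h_i(\tau_i) > 0,$$
so the only smooth local solutions of $(z')^2 = \alpha^2 - (\beta')^2$ near $\tau_i$ are $z'(\tau) = \pm(\tau - \tau_i)\sqrt{h_i(\tau)}$; the alternative $\pm|\tau - \tau_i|\sqrt{h_i(\tau)}$ is only Lipschitz and is incompatible with $\sigma$ being a smooth immersion. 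Thus the sign of $z'$ is forced to flip at each $\tau_i$, and $z'$ on $[0,\pi]$ is uniquely determined up to a single overall sign. The reflection $z \mapsto -z$ in ${\rm Isom}(\bR)$ removes this remaining sign, and the translation symmetry is pinned down by the normalization \eqref{norm}. Integrating the resulting $z'$ produces a unique immersion $\sigma \in \cU_1$ with $\Pi_D(\sigma) = (\alpha,\beta)$, giving surjectivity; reading the same argument backwards starting from a given $\sigma$ yields injectivity (with the matching component of $\cU_1$ selected by the sign-flip pattern dictated by the given $\{\tau_i\}$).

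For the kernel, I would use the linearization formula \eqref{dpid} unchanged:
$$D\Pi_D(\rho,\zeta) = \Big(\tfrac{1}{\alpha}(r'\rho + z'\zeta),\ \rho\Big).$$
If $D\Pi_D(\rho,\zeta) = 0$ then $\rho \equiv 0$ and $z'\zeta \equiv 0$; since $z'$ vanishes only on the finite set $\{\tau_i\}$, $\zeta$ must vanish on the complement and hence, by continuity, on all of $[0,\pi]$. This gives ${\rm Ker}\,D\Pi_D = 0$ at every point of $\cU_1$.

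The hard part is the regularity bookkeeping at the $\tau_i$: non-degeneracy of the minima of $\chi$ is exactly what allows $\sqrt{\alpha^2 - (\beta')^2}$ to admit a smooth sign-changing branch, so that one immersion (and only one, modulo the quotient group) is produced. If the minima were permitted to be degenerate, the branch analysis would fail and several immersions could bifurcate from the same boundary metric; this is precisely why $\cV_1$ has been cut out by this non-degeneracy condition. The asserted failure of Fredholmness and properness of $\Pi_D$ is already recorded in the discussion surrounding \eqref{zprime}: at zeros of $z'$, the image of $D\Pi_D$ must satisfy the pointwise condition $\alpha q = r'\rho$, and solving $\zeta = (\alpha q - r'\rho)/z'$ loses derivatives near each $\tau_i$, so the image is not closed in $\hat{\rm Met}^{m}_{S^1}(S^2)$.
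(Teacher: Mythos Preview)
Your proposal is correct and follows essentially the same approach as the paper. The only presentational difference is that where you Taylor-expand $\alpha^2-(\beta')^2=(\tau-\tau_i)^2 h_i(\tau)$ with $h_i(\tau_i)>0$ and invoke smoothness of $\sigma$ to rule out the non-sign-changing branch $\pm|\tau-\tau_i|\sqrt{h_i}$, the paper instead differentiates the relation $(z')^2=\alpha^2-(\beta')^2$ twice to obtain $(z'')^2+z'z'''=(\alpha\alpha')'-(\beta'\beta'')'$ and reads off $z''(\tau_i)\neq 0$ directly; both arguments yield the same conclusion that $z'$ has a simple zero at each $\tau_i$, after which the sign of $z'$ is propagated interval-by-interval and the residual global $\pm$ is absorbed by the $\bZ_2$-reflection in ${\rm Isom}(\bR)$, exactly as you do. Your kernel computation is also the intended one.
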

 
 \begin{proof} Let $(\a, \b) \in \cV_1^m$ be arbitrary. As with the proof of Lemma 4.5, the issue here is the determination of the 
 sign of $z'$ in terms of $\a, \b$. Once this is determined, $z$ and hence $\s = (r, z)$ is determined as above. 

  We first note that the order of vanishing of $z'$ is completely determined by $\a, \b$. Namely, by \eqref{z2}, 
\be \label{z'}
z' z'' = \a \a' - \b' \b'',
\ee
and 
\be \label{z''}
(z'')^2  + z'z'''  = (\a \a')' - (\b' \b'')'.
\ee
Thus at $\t_i$ where $z' = 0$, whether $z'' \neq 0$ or $z'' = 0$ is completely determined by whether the minimum of $\a - |\b'|$ is a 
non-degenerate minimum or not. 

  Now starting at the pole $\t = 0$, we have either $z'' > 0$ or $z'' < 0$. Suppose $z'' > 0$. Then the sign of $z'$ and hence 
as before $z$ and so $\s$, is uniquely determined up to first zero $\t_1$ of $\chi$. Since $z''(\t_1) \neq 0$, by continuity the 
sign of $z'$ is uniquely determined slightly past $\t_1$, so that $\s$ is then uniquely determined up to the second zero 
$\t_2$ of $\chi$. Proceeding in this way uniquely determines a curve $\s_1$ on $[0,\pi]$ with $\Pi_D(\s_1) = (\a, \b)$. 

  If $z''(0) < 0$, one may perform the same process, obtaining a curve $\s_2$ also with $\Pi_D(\s_2) = (\a, \b)$. However, up 
to translations along the $z$-axis, the $\bZ_2$ reflection through the plane $z = z(0)$ maps $\s_1$ to $\s_2$, so that 
$\s_1$ and $\s_2$ are congruent and represent the same point in $\cU_1$. This proves $\Pi_D$ is a bijection on $\cU_1$. 
The second statement follows easily from \eqref{dpid} as above. 

\end{proof} 

  One may proceed inductively in this way to higher order degeneracies of $z'$. Thus, let $\cV_2 \subset \hat{\rm Met}_{S^1}^m(S^2)$ 
be the subset such that the minima $\t_i \in (0,\pi)$ of $\chi$ are non-degenerate at order either $2$ or $4$; note that $\cV_1 \subset \cV_2$. 
As before, set $\cU_2 = (\Pi_D)^{-1}(\cV_2) \subset \bE_{\nu=0}^m$. 
Differentiating \eqref{z''} twice gives 
$$3(z''')^2 + 4z''z^{(4)} + z'z^{(5)}  = (\a \a')''' - (\b' \b'')''' = \tfrac{1}{2}((\a^2)^{(4)} - (\b^2)^{(5)}).$$
Thus, for $\s \in \cU_2$ (with $m \geq 4$) if $z'(\t_i) = z''(\t_i) = 0$ for some $\t_i$, then $z'''(\t_i) \neq 0$. 

  Note that smoothness requires $z''' = 0$ at the poles $0,\pi$. In any case, choose a generic point $\bar \t$ sufficiently near $0$ so 
that $(\a \a' - \b' \b'')(\bar \t) \neq 0$. By \eqref{z'}, $z'(\bar \t) \neq 0$ and $z''(\bar \t) \neq 0$. Suppose $z''(\bar \t) > 0$. Then the 
sign of $z'$ is uniquely determined by $\a, \b$ and so by \eqref{zp}, $z$ is uniquely determined by $\a, \b$ near $\bar \t$. The 
same arguments as above then show that $(\a, \b) \in \cV_2$ uniquely determine a curve $\s_1 \in \cU_2$. Again if $z''(\bar \t) 
< 0$, then the same construction produces another curve $\s_2 \in \cU_2$ which is congruent in ${\rm Isom}(\bR)$ to $\s_1$. 

  One may construct in the same way spaces $\cV_k \subset \cV_{k+1} \subset \cdots$, where $\cV_k$ consists of metrics for 
which $\chi$ has isolated zeros $\t_i$ each of which is non-degenerate at some order $\leq 2k$. This of course requires 
$m \geq 2k$. In the same way, given sufficient smoothness, one has the inclusion of $\Pi_D$-inverse images 
$\cU_k \subset \cU_{k+1} \cdots$. The same proof of Lemmas 4.5 - 4.6 as above shows that 
$$\Pi_D: \cU_k \to \cV_k,$$
is a bijection. Similarly, ${\rm Ker}D\Pi_D = 0$ on $\cU_k$. 

 Suppose however $\s \in \cU_k$, $m \geq 2k$ is a curve for which there exist zeros $\t_i$ and $\t_j$ of $\chi$ (not necessarily 
consecutive) at which $\chi$ vanishes at all orders $\leq 2(k-1)$ and is non-vanishing at order $2k$. Then the derivatives $z^{(j)} = 0$ for all 
$j \leq k$ at $\t_i$ and $\t_j$ but $z^{(k+1)} \neq 0$ at $\t_i$ and $\t_j$. Suppose also  
$$z(\t_i) = z(\t_j).$$
One may perform a $\bZ_2$-reflection of $\s$ through the plane $z = z(\t_i)$ over the interval $[\t_i, \t_j]$ to obtain another curve $\w \s$ 
which agrees with $\s$ outside $[\t_i, \t_j]$. These two curves differ only by a local, not global, $\bZ_2$-reflection and so are 
non-congruent curves. Note however that while $\s \in \bE_{\nu=0}^{m}$, with $m \geq 2k$, $\w \s$ is only in $\bE_{\nu=0}^{k}$, so 
that these curves are not in the same space. This loss of derivatives can be cured (only) by going to $C^{\infty}$, and when $z'$ vanishes 
to infinite order at $\t_i$ and $\t_j$. Thus there are regions where $\Pi_D$ becomes (at least) a $2-1$ map in certain regions in the $C^{\infty}$ 
context. 

  Finally, to conclude this discussion suppose that $\chi = \a - |\b'|$ on an open $\t$-interval $(\t_0, \t_1)$. The surface $\Si = {\rm Im}F$ 
is then a flat annulus in this region. Let $f$ be any smooth function of compact support in $(\t_0, \t_1)$. Then the deformation 
$$\s_s = \s + sfN,$$
$N = \p_z$, induces an infinitesimal isometric deformation of $\Si$, $\cL_{fN}\g = 0$, Thus ${\rm Ker}D\Pi_D$ is now 
infinite dimensional. Also, for any $s$ and $f$ as above, the surfaces $\Si_{\pm}$ generated by
$$\s_{\pm} = \s \pm sfN$$
are isometric but not congruent. Thus, one has here the $2-1$ fold behavior discussed in \S 2 and \S 3. 

  Summarizing, the discussion above proves: 
  
\begin{proposition}
For any $\g = (\a, \b) \in {\rm Met}_{S^1}^{\infty}(S^2)$ satisfying \eqref{ab}, there is a $C^{\infty}$ axi-symmetric isometric immersion 
$F:S^2 \to \bR^3$, so that 
$$F^*(g_{Eucl}) = \g.$$
The same result holds for $C^m$ smoothness, for any $1 \leq m < \infty$. However, in general the map 
$$\Pi_D: {\rm Imm}_{S^1}^{\infty}(S^2) \to {\rm Met}_{S^1}^{\infty}(S^2),$$
may be finite-to-one or $\infty$-to-one. 
\end{proposition}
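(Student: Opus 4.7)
The plan is to assemble into a single statement the case-by-case analysis carried out in the paragraphs preceding the proposition, namely Lemma 4.5 (zeros of $\chi = \a - |\b'|$ absent in $(0,\pi)$), Lemma 4.6 (non-degenerate zeros), and the extensions to zeros of higher finite order and to intervals of identical vanishing. For existence, the radial coordinate of any axi-symmetric immersion is forced by \eqref{beta} to be $r(\t) = \b(\t)$, and \eqref{z2} gives $(z')^2 = \a^2 - (\b')^2$. Under \eqref{ab} the right-hand side is non-negative, so the equation can always be solved; the only work is to choose a sign for $z'$ so that $\s = (r,z)$ is a $C^{\infty}$ (resp.\ $C^m$) immersion meeting the axis orthogonally at both poles.

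First I would handle the interior $\t \in (0,\pi)$. On any interval where $\chi > 0$, integration of $z' = \pm\sqrt{\a^2 - (\b')^2}$ under the normalization \eqref{norm} yields a smooth $z$, once a global sign is fixed. To propagate the sign across an isolated zero $\t_\ast$ of $\chi$ of finite order $2k$, I would use the chain of identities obtained from \eqref{z2} by successive differentiation (the orders $k = 1, 2$ cases being exactly Lemma 4.6 and the discussion that follows it): these force $z^{(j)}(\t_\ast) = 0$ for $j \leq k$ and $z^{(k+1)}(\t_\ast) \neq 0$, and the Taylor factorization $\a^2 - (\b')^2 = c(\t - \t_\ast)^{2k}(1 + O(\t - \t_\ast))$, $c > 0$, lets me write $z'(\t) = \pm(\t - \t_\ast)^k \sqrt{c(1+O(\t-\t_\ast))}$, which is $C^\infty$ once the global sign is fixed. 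If $\chi$ vanishes identically on a maximal open interval $I$, then $z' \equiv 0$ there and $z$ is constant on $I$, so smoothness is automatic. At the poles, the equalities $|\b'(0)| = \a(0)$, $|\b'(\pi)| = \a(\pi)$ together with the higher-order conditions built into ${\rm Met}_{S^1}^{m,\a}(S^2)$ translate into the vanishing of sufficiently many derivatives of $z$, giving smoothness and orthogonality of $\s$ at the endpoints. The two admissible global sign choices for $z'$ differ by a horizontal reflection composed with a vertical translation, i.e.\ by an element of ${\rm Isom}(\bR)$, hence determine the same class in $\bE_{\nu = 0}^{m}$. The $C^m$ statement is the same argument line-for-line, requiring only $m \geq 2k$ at each zero of order $2k$.

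For the non-uniqueness assertions I would isolate the two mechanisms already foreshadowed. \emph{Finite-to-one}: if $\chi$ has two isolated infinite-order zeros $\t_i < \t_j$ in $(0,\pi)$ with $z(\t_i) = z(\t_j)$, then reflecting $\s|_{[\t_i, \t_j]}$ through the horizontal plane $z = z(\t_i)$ and leaving $\s$ unchanged elsewhere produces a $C^\infty$ curve $\w\s$ with $\Pi_D(\w\s) = \Pi_D(\s)$; since $\w\s$ differs from $\s$ by a local, not global, reflection, it is not congruent to $\s$ under ${\rm Isom}(\bR)$, and independent reflections at each of $N$ such pairs produce $2^N$ distinct preimages. \emph{Infinity-to-one}: if $\chi \equiv 0$ on an open interval $I \subset (0,\pi)$, then the corresponding annulus in $\Si$ lies in the plane $\{z = \mathrm{const}\}$ and is flat, so the normal variation $\s + sfN$ with $f \in C_c^\infty(I)$ preserves the induced metric for all $s$ and gives an infinite-dimensional fiber of $\Pi_D$.

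The main obstacle is really the smoothness bookkeeping across zeros of $\chi$ of arbitrarily high finite order in the $C^\infty$ case, which reduces to the Taylor factorization sketched above but requires care about the parity of $k$ (to verify that the branch $\pm(\t-\t_\ast)^k\sqrt{c+\cdots}$ is genuinely smooth rather than merely $C^{k-1}$) and, in the non-uniqueness argument, about the fact that the relevant group of Euclidean congruences in Weyl cylindrical gauge is only $\mathrm{Isom}(\bR) \subset \mathrm{Isom}(\bR^3)$; once a proper sub-arc is reflected, no ambient Euclidean motion brings the new curve back to the original, which is precisely what produces honest non-uniqueness rather than mere gauge redundancy.
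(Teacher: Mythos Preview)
Your approach is essentially the paper's own: the proposition is stated as a summary of the preceding discussion (Lemmas 4.5, 4.6, and the paragraphs on higher-order zeros and flat intervals), and your write-up follows that structure closely. The existence argument and the finite-to-one mechanism via local $\bZ_2$-reflections at infinite-order zeros match the paper.

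There is, however, a genuine error in your ``infinity-to-one'' paragraph. You assert that when $\chi \equiv 0$ on an open interval $I$, the normal variation $\s_s = \s + sfN$ with $f \in C_c^{\infty}(I)$ \emph{preserves the induced metric for all $s$}. It does not. On $I$ one has $z' = 0$ and $\a^2 = (r')^2 = (\b')^2$; after deformation the new curve is $(r(\t), z_0 + sf(\t))$, so the new $\a_s^2 = (r')^2 + s^2(f')^2$, and the induced metric changes by $s^2(f')^2\,d\t^2$. This is nonzero for $s \neq 0$ unless $f$ is constant (hence zero). The paper is careful here: it claims only that $\cL_{fN}\g = 0$, i.e.\ the deformation is an \emph{infinitesimal} isometric deformation, so that $\mathrm{Ker}\,D\Pi_D$ is infinite-dimensional; and separately that $\s_+ = \s + sfN$ and $\s_- = \s - sfN$ are isometric \emph{to each other} (their induced metrics both equal $\a^2 + s^2(f')^2$ in the $d\t^2$ coefficient) but non-congruent, giving $2$--$1$ fold behavior over the metric $\Pi_D(\s_+)$, not over $\g$ itself. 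Your argument therefore does not produce an infinite fiber over any single metric. If you want an honest $\infty$-to-one fiber, you should instead argue via infinitely many independent local reflections (e.g.\ a $C^{\infty}$ metric with infinitely many disjoint pairs of infinite-order zeros of $\chi$ at matching $z$-heights), or else weaken the claim to match what the flat-annulus discussion actually establishes.
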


\begin{remark}
{\rm All of the results above hold for general $\bE_{\nu}$ as in \eqref{Enu}, when the condition \eqref{ab} is replaced by 
$$e^{\l}\a \geq |\b'|,$$
where $\l$ is determined from $\nu$ as in \eqref{lam}. 
}
\end{remark}
    
    We also note that the infinitesimal rigidity (i.e.~${\rm Ker}D\Pi_D  = 0$) on the spaces $\cU_k$ discussed above fails badly when 
one considers non-axisymmetric embeddings of axi-symmetric metrics $S^2 \to \bR^3$.\footnote{Thus the symmetry of the metric is not 
assumed apriori to extend to a symmetry of $\bR^3$.} There is a large classical literature on this, starting from the remarkable 
examples of Cohn-Vossen \cite{CV}, see also \cite{Rm}, \cite{Sp}, \cite{Sab}.  
 
   In conclusion, it would be interesting to understand if some of the complicated behavior of the isometric embedding problem
for surfaces of revolution in $\bR^3$ exhibited above carries over to general Weyl metrics and the corresponding Bartnik boundary 
map in \eqref{Bart3} or more generally in \eqref{bmap}.

\section{Appendix}

  In this Appendix, we first collect several standard facts regarding static vacuum Einstein metrics. Following this, we prove a regularity result for 
minimal surfaces in static vacuum spaces used in the proof of Theorem 3.5. 

\medskip 

On a Riemannian manifold $(M, g)$ with (local) boundary $(\dm, \g)$ the Gauss and Gauss-Codazzi equations (also known as the Hamiltonian and 
momentum constraint equations) are:
\be \label{Hamcon}
|A|^2 - H^2 + s_{\g} = s_g - 2{\rm Ric}(N,N),
\ee
\be \label{divcon}
\d A + dH = -{\rm Ric}(N, \cdot),
\ee
where $N$ is the unit normal to $\dm$ and $\d$ is the divergence operator on $\dm$. For static vacuum Einstein metrics, $s_g = 0$ and 
${\rm Ric}(N,N) = u^{-1}NN(u)$. Using the standard relation $\D_g v = NN(v) + HN(v) + \D_{\dm}v$, \eqref{Hamcon} gives 
\be \label{Hamcon2}
2u^{-1}(\D_{\dm}u + HN(u)) = |A|^2 - H^2 + s_{\g}.
\ee
Similarly, the static vacuum equations give ${\rm Ric}(N, \cdot) = u^{-1}D^2u(N, \cdot) = u^{-1}(dN(u) - A(du))$. It follows that 
\be \label{dnu}
\d (uA) + udH = - dN(u).
\ee
or equivalently 
$$\d(uA - N(u)\g) = -udH.$$

\medskip 

  Next, consider the conformally related metric $\w g = u^2 g$. Standard formulas for conformal change of metric show that the static vacuum Einstein 
equations \eqref{stat} are equivalent to the system
\be \label{tilde}
{\rm Ric}_{\w g} = 2 d\nu \cdot d\nu, \ \ \D_{\w g} \nu = 0,
\ee
where $\nu = \log u$. Two advantages of these equations are that the Ricci curvature ${\rm Ric}_{\w g}$ is positive, and second, the Ricci curvature is lower 
order (1st order) in $\nu$, which is not the case for \eqref{stat}. 

\medskip 

  Next we turn to the regularity properties of local minimal surfaces $(V, \g)$ in static vacuum Einstein metrics $(M, g, u)$. To begin, as in \eqref{Kbound}, 
assume a bound on $Q$ in \eqref{Q}, so 
\be \label{Q2}
|s_{\g}| + |{\rm Rm}_g| \leq C,
\ee
on $(V, \g)$. 

  We first discuss what uniform control the bound \eqref{Q2} gives on the geometry of $(M, g)$ and $(V, \g)$. 
It follows from the Cheeger-Gromov compactness theorem (or more simply from the uniformization theorem in 2 dimensions) that 
$\g$ is controlled, in harmonic coordinates, in $C^{1,\a}$. Similarly, $g$ is also controlled in $C^{1,\a}$, cf.~also \cite{AT}. (In case the 
geometry is (arbitrarily) collapsed, one may unwrap the collapse in universal covers and obtain the same conclusion). 

  Since $H = 0$, it follows first from the Hamiltonian constraint \eqref{Hamcon} that $A$ is uniformly bounded in $L^{\infty}$, 
$$|A| \leq C, \ \ {\rm on} \ \ V.$$  
Next, assume (as in \S 3) that $u$ is normalized at a base point $p \in V$ so that $u(p) = 1$. It then follows from \eqref{Hamcon2} and elliptic regularity 
for the Laplacian on $(V, \g)$ that $u$ is uniformly bounded in $L_{loc}^{2,p}$, for any $p < \infty$: 
\be \label{up1}
|u|_{L_{loc}^{2,p}(V)} \leq C.
\ee
Since $\D_g u = 0$ on $(M, g)$, elliptic regularity for the Laplacian on $(M, g)$ implies that $u$ is $L^{2,p}$ and so $C^{1,\a}$ up to 
$\dm$: 
\be \label{up2}
|u|_{L_{loc}^{2,p}(M)} \leq C,
\ee 
locally near $V$. Hence $N(u)$ is bounded in $L_{loc}^{1,p}$ and so in $C^{\a}$ on $V$. Finally, it then follows from the momentum constraint \eqref{dnu} 
and elliptic regularity for the $(\d, tr)$ elliptic system on $(V, \g)$ that $A$ is bounded in $L_{loc}^{1,p}$ and so $C_{loc}^{\a}$ on $V$: 
\be \label{Ac}
|A|_{C_{loc}^{\a}(V)} \leq C.
\ee 

  The main step in proving regularity is the following result. Recall $\w g = u^2 g$. 

\begin{lemma}  
The system 
\be \label{tildesys}
{\rm Ric}_{\w g} = 2 d\nu \cdot d\nu , \ \ \D_{\w g} \nu = 0,
\ee
on $(M, \w g, \nu)$ with boundary conditions 
\be \label{wH}
(\w \g, H)
\ee
where $H = u\w H - 2N(\nu)$ on $\dm$, is an elliptic boundary value problem in Bianchi gauge. 
\end{lemma}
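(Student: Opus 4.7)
The plan is to reduce Lemma 5.1 to the known ellipticity of the standard Bartnik boundary value problem for Einstein metrics, namely the pair (induced metric on $\dm$, mean curvature $H$) for Ricci-flat metrics in Bianchi gauge, and then handle the additional scalar equation on $\nu$ as a coupled but decoupled-at-principal-symbol piece.

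First, I would fix the diffeomorphism gauge in the standard way: replace ${\rm Ric}_{\w g}$ by the gauged operator ${\rm Ric}_{\w g} + \d^{*}(\b_{\w g,\w g_0})$, where $\b$ is the Bianchi operator relative to a reference metric $\w g_0$, so that the principal symbol of the interior equation acting on symmetric 2-tensors is $-\tfrac{1}{2}\D_{\w g}$ (up to the usual lower-order curvature terms). The resulting quasi-linear system for $\w g$ is then elliptic in the interior, and the scalar equation $\D_{\w g}\nu = 0$ is trivially elliptic; moreover, at the level of principal symbols, $\w g$ and $\nu$ decouple, since the right side $2\, d\nu\cdot d\nu$ of the Ricci equation is a lower order term in $\nu$. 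This is precisely the advantage, noted after \eqref{tilde}, of passing from $(g,u)$ to $(\w g, \nu)$.

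Next, I would analyze the boundary conditions \eqref{wH}. The assignment of $\w \g$ on $\dm$ gives a Dirichlet-type prescription of the full tangential part of $\w g$ at $\dm$, so the tangential-tangential components of $\w g$ are of Dirichlet type. The prescription of $H = u\w H - 2N(\nu)$ is, after rewriting, a prescription of the normal combination $\w H - 2u^{-1}N(\nu)$; the $\w H$ part is a Neumann-type condition on $\w g$ (it involves the first normal derivative of $\w g$ transverse to $\dm$), while the $N(\nu)$ part is a Neumann-type condition on $\nu$. Together with Dirichlet data $\w \g$ on $\w g$ and the interior Laplace equation for $\nu$, these form a mixed (Dirichlet on tangential metric, Neumann on normal metric component and on $\nu$) system. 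This is precisely the Bartnik boundary data pattern already studied in \cite{AK}, now phrased in the conformal picture.

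The main step is to verify the Lopatinski--Shapiro (complementing) condition for this boundary value problem. I would freeze coefficients at a boundary point, pass to the half-space model, and compute the kernel of the symbol of the gauged elliptic system paired with the symbol of the boundary operator $(\w \g, \w H - 2u^{-1}N(\nu))$, restricted to solutions decaying exponentially into the interior. The tangential metric part is Dirichlet so contributes no obstruction; the $\w H$ part is known from the analysis of the standard Ricci-flat Bartnik problem in Bianchi gauge to complement the Dirichlet data for $\w \g$ (this is where the hypothesis $H>0$, hence nondegeneracy of the boundary operator, enters). The additional $N(\nu)$ piece only perturbs the boundary symbol by a lower-order contribution involving $\nu$, and since $\nu$ satisfies an independent scalar Neumann boundary problem for the Laplacian (which is elliptic), this perturbation does not destroy the complementing condition. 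The hard part is the careful symbolic check at the boundary that the mixed pair of boundary operators on the coupled system $(\w g,\nu)$ jointly satisfy Lopatinski--Shapiro; once that is in hand, standard elliptic boundary regularity yields the claim.
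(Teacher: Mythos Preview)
Your approach is essentially the paper's: gauge-fix so that the principal part is $-\tfrac12\Delta$, note that the $d\nu\cdot d\nu$ coupling is lower order, and defer the boundary Lopatinski--Shapiro verification to \cite{A3}, \cite{AK}. The paper is slightly more efficient in that it works directly with deformations $h$ of the 4-metric $g_{\cM}$ in Bianchi gauge, so that $\nu$ is absorbed as a metric component and the boundary operator $H = u\w H - 2N(\nu)$ has, at the symbol level, exactly the same form as $\w H$ (a first normal derivative of a metric component, up to the positive factor $u$); the reduction to the $(\w\g,\w H)$ computation in \cite{AK} is then literal rather than by analogy with a coupled $(\w g,\nu)$ system.

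One correction: your parenthetical that the nondegeneracy of the boundary operator ``is where the hypothesis $H>0$ enters'' is wrong. The Bartnik boundary data satisfy the complementing condition for any value of $H$; indeed, Lemma~5.1 is invoked in the paper precisely for minimal surfaces $V$, where $H\equiv 0$. The restriction $H>0$ elsewhere in the paper is there to force $u>0$ up to $\dm$ (so that the original system \eqref{stat} is nondegenerate), not for ellipticity of the boundary conditions. Relatedly, describing the $-2N(\nu)$ term as a ``lower-order contribution'' is inaccurate: it is first order in $\nu$, the same order as $\w H$ is in $\w g$. The right statement is that the combined boundary symbol on the pair still satisfies the complementing condition --- which is exactly what the 4-dimensional repackaging in the paper makes transparent.
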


We note that $H$ in \eqref{wH} is the mean curvature of $\dm$ with respect to $g$.  

\begin{proof}
  It suffices to prove the result for the linearized system and for deformations $h$ of $g_{\cM}$ satisfying the Bianchi gauge condtion 
$\b (h) = \d h + \frac{1}{2}d \tr h = 0$, where the divergence and trace are with respect to the 4-metric $g_{\cM}$ as in \eqref{st}. 
In the interior, the leading order term is just ${\rm Ric}'(h) \sim -\frac{1}{2}\D _{\w g} h$, which is elliptic with principal symbol $-|\xi|^2 I$. 
The proof that the boundary conditions \eqref{wH} are elliptic is exactly the same as that given in \cite{A3}, \cite{AK} for the boundary 
conditions $(\w \g, \w H)$; thus we refer to \cite{A3} or \cite{AK} for details.

\end{proof} 

\begin{proposition} 
Suppose $(V, \g)$ is a (local) minimal surface in a static vacuum Einstein manifold $(M, g, u)$ satisfying the bound \eqref{Q2}. Then the geometry of 
$(V, \g)$ is bounded in the $C^{\infty}$ norm. 
\end{proposition}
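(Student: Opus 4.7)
The strategy is a standard elliptic bootstrap based on Lemma 5.1, which recasts the problem as boundary regularity for a second-order elliptic boundary value problem in Bianchi gauge. The crucial simplification is that $V$ is minimal, so the mean-curvature boundary datum is $H \equiv 0 \in C^\infty$, and only the Dirichlet-type datum $\w\g = u^2\g$ can limit the bootstrap.

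The starting point is the apriori regularity listed just before the proposition: in suitable local harmonic coordinates, $(g,u) \in C^{1,\a}$ up to $V$, with $|A| \leq C$, $A \in C^{\a}_{\mathrm{loc}}(V)$, $u \in L^{2,p}_{\mathrm{loc}}(M)$ up to $V$ by \eqref{up2}, and hence $\g \in C^{1,\a}$. Since $u>0$ is normalized at a basepoint, $u$ is bounded above and below on compact subsets, so the conformal change $\w g = u^2 g$ and its inverse preserve any H\"older class established for $(\w g, \nu)$.

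I would then work in boundary-normal (Fermi) coordinates along $V$ in which $V = \{t=0\}$; in these coordinates regularity of $\g$ reduces to regularity of $g|_{t=0}$, and tangential differentiation commutes with restriction. The induction step runs as follows: assuming $(\w g, \nu) \in C^{k,\a}$ up to $V$ for some $k \geq 1$, differentiate the elliptic system \eqref{tildesys} and the boundary conditions \eqref{wH} tangentially along $V$. The resulting system for tangential derivatives is again elliptic in Bianchi gauge, with right-hand side in $C^{k-1,\a}$ and boundary data one tangential derivative higher than $\w\g$; the boundary Schauder estimate for the BVP of Lemma 5.1 (proved exactly as in \cite{A3} and \cite{AK}) then yields $C^{k+1,\a}$ tangential control of $(\w g, \nu)$ up to $V$, and the second-order equations \eqref{tildesys} supply the corresponding normal regularity. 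Iterating yields $C^\infty$ regularity of $(\w g, \nu)$ up to $V$; undoing the conformal change gives $(g, u) \in C^\infty$ up to $V$, and in particular $(V, \g) \in C^\infty$.

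The main technical obstacle is initiating the bootstrap at the low starting regularity $C^{1,\a}$: the coefficients of the Bianchi-gauged system are then only $C^{0,\a}$, which is borderline for classical Schauder theory. This gap is bridged by first running an $L^p$-Calder\'on--Zygmund argument for the Bianchi-gauged elliptic operator, using the $L^{2,p}$ bound on $u$ up to $V$ from \eqref{up2} and the $C^\a$ control of $A$ from \eqref{Ac} to upgrade to $C^{2,\a}$, after which the H\"older bootstrap above proceeds routinely. Once these foundations are in place, the essential work is done by Lemma 5.1 and the minimality condition $H=0$.
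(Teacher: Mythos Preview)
Your bootstrap is circular at the decisive step. The boundary datum $\w\g = u^2\g$ in \eqref{wH} is \emph{not} prescribed data: $V$ is a free minimal surface inside $(M,g,u)$, so $\w\g$ is simply the trace $\w g|_{TV\times TV}$ of the unknown, and its regularity can never exceed that of $\w g$ itself. The Schauder estimate for the elliptic boundary value problem of Lemma 5.1 upgrades $(\w g,\nu)$ from $C^{k,\a}$ to $C^{k+1,\a}$ only if the Dirichlet-type datum $\w\g$ is already in $C^{k+1,\a}$; with $\w\g$ only in $C^{k,\a}$ you recover at most $C^{k,\a}$ and the induction stalls. Tangential differentiation does not help: the differentiated system has boundary datum $\partial_T\w\g \in C^{k-1,\a}$, and Schauder then returns $\partial_T(\w g,\nu)\in C^{k-1,\a}$, i.e.\ no gain. (Your phrase ``boundary data one tangential derivative higher than $\w\g$'' gets the direction wrong: it is one derivative \emph{lower}.)

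What is missing is an independent mechanism to upgrade the regularity of $\w\g$. The paper supplies exactly this: apply the Hamiltonian constraint \eqref{Hamcon} in the $\w g$-metric,
\[
|\w A|^2 - \w H^2 + s_{\w\g} = s_{\w g} - 2\,{\rm Ric}_{\w g}(\w N,\w N),
\]
whose right-hand side, by \eqref{tildesys}, is first order in $\nu$ and hence already controlled. Since $\w A,\w H$ are controlled (via \eqref{Ac} and the control on $\nu$), this pins down $s_{\w\g}$, the Gauss curvature of the two-dimensional surface $(V,\w\g)$. In dimension two, control of the Gauss curvature in $C^{k-1,\a}$ yields control of the metric in $C^{k+1,\a}$ in harmonic coordinates (uniformization / Cheeger--Gromov). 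This is the extra derivative of $\w\g$ that feeds the bootstrap; with it in hand, boundary regularity for the elliptic system of Lemma 5.1 gives $\w g\in C^{k+1,\a}$ up to $V$, then $\w H$ and hence $N(\nu)$ gain a derivative via $H=0$, then $\nu$ gains a derivative via $\w\D\nu=0$, and the loop closes. Your outline has the right ambient architecture but omits this constraint-equation step, without which the bootstrap cannot start.
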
 

\begin{proof} 
The proof follows the usual bootstrap method in elliptic PDE. To begin, by \eqref{up2} we have uniform $C^{1,\a}$ control on $\nu$. By \eqref{tildesys} this gives 
uniform $C^{\a}$ control on $\w {\rm Ric}$, and hence uniform $C^{2,\a}$ control on $\w g$ in the interior of $M$. Next, the 
Hamiltonian constraint \eqref{Hamcon} in the $\w g$ metric gives 
$$|\w A|^2 - \w H^2 + s_{\w \g} = s_{\w g} - 2{\rm Ric}_{\w g}(\w N, \w N).$$
The right side is 1st order in $\nu$, and $\w A$, $\w H$ are uniformly controlled in $C^{\a}$ by \eqref{Ac} and the control on $\nu$. Hence $s_{\w \g}$ 
is uniformly controlled in $C^{\a}$. As above, by the uniformization theorem (or Cheeger-Gromov compacness theorem) it follows that $\w \g$ is uniformly 
controlled in $C^{2,\a}$ modulo diffeomorphisms, i.e.~in harmonic coordinates. It then follows from boundary regularity for elliptic systems, cf.~\cite{AT} 
that $\w g$ is uniformly controlled in $C^{2,\a}$ up to the boundary $V$. This in turn gives $C^{1,\a}$ control on $\w H = \cL_{\w N}dv_{\w g}$ and so, 
since $H = 0$, $C^{1,\a}$ control on $N(\nu)$. Via the elliptic equation $\w \D \nu = 0$, this gives uniform $C^{2,\a}$ control on $\nu$ up to the 
boundary $V$ and hence uniform control of $g$ in $C^{2,\a}$ up to the boundary $V$. This then also gives uniform control of $A$ in $C^{1,\a}$ on $V$. 

   This shows that the initial regularity of $(M, g)$ up to the boundary $(V, \g)$ has been increased by one derivative. Iterating this process then 
proves the result. 

\end{proof}

\bibliographystyle{plain}

\end{document}